\def\NN{{\mathbb N}}
\def\ZZ{{\mathbb Z}}
\def\QQ{{\mathbb Q}}
\def\CC{{\mathbb C}}
\def\N{{\mathcal N}}
\def\gl{{\mathfrak{gl}}}
\def\ell{l}
\def\m{{\mathfrak m}}
\def\n{{\mathfrak n}}
\DeclareMathOperator{\GL}{GL}
\theoremstyle{plain}
\newtheorem{thm}{Theorem}[section]
\newtheorem{cor}[thm]{Corollary}
\newtheorem{lem}[thm]{Lemma}
\newtheorem{prop}[thm]{Proposition}
\theoremstyle{definition}
\newtheorem{defn}[thm]{Definition}
\newtheorem{remark}[thm]{Remark}
\newtheorem{eg}[thm]{Example}
\theoremstyle{remark}
\newtheoremstyle{Acknowledgements}
  {}
    {}
     {}
     {}
    {\bfseries}
    {}
     {.5em}
     {\thmname{#1}\thmnumber{ }\thmnote{ (#3)}}
\theoremstyle{Acknowledgements}
\date{\today, \currenttime} 
\begin{document}
\title[ ]{Eulerian summation operators and a remarkable family of polynomials }

\author{Kathrin Maurischat and Rainer Weissauer}
\address{\rm {\bf Kathrin Maurischat}, Mathematisches Institut,
   Heidelberg University, Im Neuenheimer Feld 205, 69120 Heidelberg, Germany }
\curraddr{}
\email{\sf maurischat@mathi.uni-heidelberg.de}
\address{\rm {\bf Rainer Weissauer}, Mathematisches Institut,
   Heidelberg University, Im Neuenheimer Feld 205, 69120 Heidelberg, Germany }
\curraddr{}
\email{\sf weissauer@mathi.uni-heidelberg.de}

\begin{abstract}
We give several families of polynomials which are related by Eulerian summation operators.
They satisfy interesting combinatorial properties like being integer-valued  at integral points.
This involves nearby-symmetries and a recursion for the values at half-integral points.
We also obtain identities for super Catalan numbers.
\end{abstract}

\maketitle
\setcounter{tocdepth}{2}
\tableofcontents
\section{Introduction}
Define the function $A:\NN\times \NN\to\NN$ by $A(k,l)=a(k,l)^2$, where
\begin{equation*} 
 a(k,l)\:=\:\sum_{\nu=0}^k\binom{l}{\nu}\binom{l-1+k-\nu}{l-1}\:.
\end{equation*}
In this paper we study numbers $P(m,n)$ which satisfy the summation equations
\begin{equation}\label{first_summation_equation}
 P(m,n+1)+2\cdot P(m,n)+P(m,n-1)\:=\: A(n,m)\footnote{The switching of $m$ and $n$ on the right hand side is due to other equalities explained later on. }
\end{equation}
for all $m\in\NN=\{1,2,3,\dots\}$ and all $n\in\{2,3,\dots\}$.  
It is worthwhile noting that once having found one solution $P(m,n)$ of these equations, any other solution $G(m,n)$ is given by $G(m,n)=P(m,n)+(-1)^n( c_1(m)n+c_0(m))$, 
where $c_0(m)$ and $c_1(m)$ are complex numbers depending only on $m$. Because, the difference $B(m,n)=G(m,n)-P(m,n)$ clearly satisfies the trivial summation equations
\begin{equation*}
 B(m,n+1)+2\cdot B(m,n)+B(m,n-1)\:=\: 0\:,
\end{equation*}
which exactly have the solutions $B(m,n)=(-1)^n(c_1n+c_0)$ with $c_1,c_0\in\CC$ for each single $m$. 
Notice further that $A(n,m)$ for fixed $m$ is polynomial in $n$ of degree $2(m-1)$. The summation operator
\begin{equation*}
 Sf(x)\:=\:f(x+\frac{1}{2})+f(x-\frac{1}{2})
\end{equation*}
is bijective on the polynomial ring $\CC[x]$. Hence there exists exactly one family of polynomials, which by abuse of notation we call $P(m,x)$, such that the polynomials
\begin{equation*}
 S^2 P(m,x)\:=\:  P(m,x+1)+2\cdot P(m,x)+P(m,x-1)
\end{equation*}
at each $x=n\in\NN$ have values $S^2P(m,n)=A(n,m)$.
From now on we will denote by $P(m,n)$ the special solution of (\ref{first_summation_equation}) given by the values of these polynomials $P(m,x)$ at places $x=n\in\NN$.
This solution has a number of interesting properties of which we collect the two most important ones here. 
First, it describes an integer-valued function
\begin{equation*}
 P:\NN\times\NN\:\to\:\ZZ\:,
\end{equation*}
and for $n\geq m$ the values $P(m,n)$ are indeed natural numbers. This is shown in Corollary~\ref{corrolar_1} and Remark~\ref{remarks}~(iii).
Second,
consider the summation equation in the first variable
\begin{equation}\label{second_summation_equation}
 \widehat P(m+1,n)+2\cdot \widehat P(m,n)+\widehat P(m-1,n)\:=\:A(m,n)\:.
\end{equation}
By virtue of formula (\ref{first_summation_equation}), the values $\widehat P(m,n)=P(n,m)+(-1)^m(c_1(n)m+c_0(n))$  give a solution of (\ref{second_summation_equation}) 
for all $c_0(n),c_1(n)\in\CC$.
In Theorem~\ref{thm_summation_operator} we show that indeed $P(m,n)$ also is a solution of (\ref{second_summation_equation}).
It follows that $P(m,n)$ is nearly symmetric for all $m,n\in\NN$,
\begin{equation*}
 P(m,n)\:=\:P(n,m)+(-1)^m\bigl(c_1(n)m+c_0(n)\bigr)\:,
\end{equation*}
and we show $c_1(n)=(-1)^{n-1}$ and $c_0(n)=(-1)^n\cdot n$.
Hence, $P(m,n)$ is almost a polynomial in the first variable, too.
For  the first values $P(m,n)$ see Table~1.\\

The strategy of this paper is reverse to the above exposition. 
In Proposition~\ref{definierende_eigenschaften} we  iteratively define a family of polynomials $P(m,x)$  imposing a number of properties on them.
Then we determine an explicit formula for these $P(m,x)$ and give the results on integer values and nearby symmetry in Section~\ref{sec_polnomials}.
In Theorem~\ref{thm_summation_operator} we define two polynomials $A_1(m,x)$ and $A_2(x,n)$ which both interpolate the values $A(m,n)$. 
Here $A_2(\cdot,m)$ is the polynomial interpretation of $A(\cdot,m)$  used above.
By  counting arguments we show  the first variable summation equation
\begin{equation*}
 P(m+1,x)+2P(m,x)+P(m-1,x)\:=\:A_1(m,x)\:.
\end{equation*}
This implies (\ref{second_summation_equation}) for all $x=n\in \NN$ and
 the second variable summation equation $P(m,x+1)+2P(m,x)+P(m,x-1)=A_2(x,m)$ for all $x=n\in\NN$. Both sides  being polynomials,  the equation must hold for all $x$.
As a consequence, we obtain 
 a polynomial identity $m^2A_1(m,x)=x^2A_2(x,m)$  (Proposition~\ref{properties_A1_A2}), respectively the symmetry 
\begin{equation*}
 m^2A(m,n)\:=\:n^2A(n,m)\:.
\end{equation*}

The numbers $P(m,n)$ arise as dimensions of certain $\GL(n\vert n)$-modules, where $\GL(n\vert n)$ is a  general linear super group. See \cite{rank-3-and-4} for details. 
To our surprise their fascinating combinatorial properties have not  been studied  in the literature so far. \\

The values $P(m,n)$ satisfy nice summation equations in the first and in the second variable.
One may ask whether this also  holds  for  the mixed summation equation. Define the family $Q(m,x)$ of polynomials  by the images of $P(m,x)$ under the summation operator
\begin{equation}\label{third_summation_equation}
 Q(m,x)\:=\:P(m,x)+P(m,x-1)+P(m-1,x)+P(m-1,x-1)\:.
\end{equation}
We study the combinatorial properties of the values $Q(m,n)$ in section~\ref{sec_mixed}. Like for the construction of $P(m,x)$, in Proposition~\ref{definierende_eigenschaften_mixed}
we impose properties on a family $Q(m,x)$ of polynomials defining them iteratively, and show that the right hand sides of  (\ref{third_summation_equation}) satisfy these properties.
For the summation operator $S$ it holds
\begin{equation*}
 Q(m,x+\frac{1}{2})\:=\:S\bigl(P(m,x)+P(m-1,x)\bigr)\:.
\end{equation*}
This suggests that the values at half-integral numbers of all the polynomials involved should allow a description. We give one by Proposition~\ref{prop_x=1/2_values_polynomials} and 
a recursion process. This also  justifies the mixing of the summation operators $S$ and $\widetilde S$
\begin{equation*}
 \widetilde Sf(x)\:=\:f(x+1)+f(x)\:.
\end{equation*}

We also obtain the identity
\begin{equation*}
 Q(m,x)\:=\:\widetilde E\bigl(A_2(x,m)+A_2(x,m-1)\bigr)\:,
\end{equation*}
where the Euler operator $\widetilde E=\widetilde S^{-1}$ is the inverse on polynomials of  the  operator $\widetilde S$.

We get a new polynomial identity  from this in Proposition~\ref{prop_new_polynomial_id}.
This involves the preimage $\widetilde E\bigl(A_2(x,m)\bigr)$ for which we 
%
have to compute the polynomials
\begin{equation*}
F(x,\nu,n)\:=\: \widetilde E\bigl(\begin{bmatrix}
                x\\n
               \end{bmatrix}\cdot\begin{bmatrix} x-\nu\\n\end{bmatrix}\bigr)\:,
\end{equation*}
where $\begin{bmatrix} x\\n \end{bmatrix}=\frac{1}{n!}x(x-1)\cdots(x-(n-1))$.
This is the purpose of Section~\ref{sec_Euler_operator}.
We give a method for finding the preimage $\widetilde E(f)$ for a polynomial $f$ in case that a series of subsequent values $f(0),f(1),\dots,f(deg f)$ is given.
This is a result parallel to Euler's summation formula for the solution $\tilde f$ of the difference operator $\tilde f(x+1)-\tilde f(x)=f(x)$ (see \cite[11.10]{koenigsberger}). 
But because the inverse of the difference operator is a discrete integration operator, whereas $\widetilde E$ is not, our formula in Proposition~\ref{urbild-polynom} is more bulky.
The constant coefficients $c(\nu,n-1)=F(0,\nu,n-1)$ of the above polynomials satisfy two recursion formulas themselves (Proposition~\ref{prop-preimages})
\begin{equation*}
 c(\nu,n)\:=\:c(\nu-2,n)+c(\nu-1,n-1)\:,
\end{equation*}
and 
\begin{equation*}
 c(\nu,n)\:=\:-c(\nu-2,n)+\frac{\nu}{n}c(\nu-1,n-1)\:.
\end{equation*}
They are  given by Gessel's~\cite{gessel} super Catalan numbers $C(m,k)=\frac{(2m)!(2k!)}{2\cdot m!k!(m+k)!}$ 
\begin{equation*}
 c(\nu,n)\:=\: \frac{(-1)\mu}{2^{2n}}\cdot C(n-\mu,\mu)\:,
\end{equation*}
if $\nu$ is of the form $\nu=n-2\mu$. If $\nu$ is not of this form, then $c(\nu,n)$ is zero.
In Corollary~\ref{cor-on-super-Catalan} we give some identities for super Catalan numbers which we obtain from the above construction.
\section{A family of polynomials}\label{sec_polnomials}
\begin{prop}\label{definierende_eigenschaften}
For $m=0,1,2,\dots$ there is a unique family of polynomials $P(m,x)$   in $\QQ[x]$ with the following properties.
\begin{itemize}
 \item [(i)] $P(0,x)=0$.
 \item [(ii)] $\deg_x P(m,x)\leq 2(m-1)$ for all $m>0$.
 \item [(iii)]  $P(m,x)=P(m,-x)$ holds for all  $m\in \NN_0$.
 \item [(iv)] The function $f(m,n)=P(m,n)+(-1)^{m+n}\cdot m$ is a symmetric function on $\NN\times \NN$, i.e. $f(m,n)=f(n,m)$.
\end{itemize}
\end{prop}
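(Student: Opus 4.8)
The plan is to prove existence and uniqueness simultaneously, by induction on $m$. The key preliminary observation is that properties (ii) and (iii) together say precisely that, for $m>0$, $P(m,x)$ is a polynomial in $x^{2}$ of degree $\leq m-1$; such a polynomial is determined by its $m$ values at $x=0,1,\dots,m-1$, because the squares $0,1,4,\dots,(m-1)^{2}$ are pairwise distinct and one may interpolate in the variable $y=x^{2}$. On the other hand, property (iv), applied to the pairs $(m,j)$ with $0\leq j\leq m-1$ — the index $j=0$ being admissible because $P(0,x)=0$ is part of the data, so that $f(m,0)=f(0,m)$ makes sense — forces
\[
 P(m,j)\:=\:P(j,m)+(-1)^{m+j}(j-m)\qquad(0\leq j\leq m-1)\:,
\]
and each right-hand side is already known once $P(0,x),\dots,P(m-1,x)$ are in place. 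So the inductive step carries no freedom.

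Concretely, I would set $P(0,x)=0$ and, assuming $P(0,x),\dots,P(m-1,x)\in\QQ[x]$ constructed, put $u_{j}:=P(j,m)+(-1)^{m+j}(j-m)$ for $0\leq j\leq m-1$ (so that $u_{0}=(-1)^{m+1}m$), and let $P(m,x)$ be the unique polynomial in $x^{2}$ of degree $\leq m-1$ with $P(m,j)=u_{j}$ for $j=0,\dots,m-1$. Lagrange interpolation of rational values at integer nodes keeps us inside $\QQ[x]$. Properties (i), (ii) and (iii) are then immediate from the construction, so the only assertion that still needs an argument is (iv).

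To establish (iv) I would rephrase it as the identity $P(m,n)-P(n,m)=(-1)^{m+n}(n-m)$ for all $m\neq n$, which is equivalent to $f(m,n)=f(n,m)$; since interchanging $m$ and $n$ merely negates both sides, it suffices to treat $m<n$. But then $m\in\{0,1,\dots,n-1\}$, so the defining relation of $P(n,x)$ (with $j=m$) gives $P(n,m)=P(m,n)+(-1)^{n+m}(m-n)$, which is exactly the desired identity after rearrangement. Thus the symmetry for a pair $\{m,n\}$ is automatically installed at the step indexed by its larger element; seeing that it suffices to enforce symmetry only ``downwards'' at each stage is the conceptual heart of the argument, and the point I expect to need the most care to state cleanly.

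Uniqueness follows along the same induction: (i) determines $P(0,x)$, and if a second family $P'(m,x)$ satisfying (i)--(iv) agrees with $P$ through index $m-1$, then (iv) forces $P'(m,j)=u_{j}=P(m,j)$ for $0\leq j\leq m-1$, whence $P'(m,x)=P(m,x)$ by the interpolation remark. The only mild obstacle I foresee is the dimension count: (ii) together with (iii) leaves exactly $m$ free parameters for $P(m,x)$, while (iv), read so as to include the argument $0$, supplies precisely $m$ linear conditions — once this matching is noticed, everything else is routine.
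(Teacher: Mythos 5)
Your proposal is correct and takes essentially the same inductive route as the paper: property (iv), applied ``downwards'' to the pairs $(m,j)$ with $0\le j\le m-1$ (including $j=0$, exactly as the paper does), supplies precisely the number of values that (ii) and (iii) leave free, and interpolation finishes the step. The only cosmetic difference is that you interpolate in $y=x^{2}$ at the $m$ nodes $0,1,\dots,(m-1)^{2}$, while the paper interpolates in $x$ at the $2m+1$ symmetric nodes $-m,\dots,m$.
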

\begin{proof}[Proof of Proposition~\ref{definierende_eigenschaften}]
We show that the properties (i)--(iv)  uniquely define the polynomials $P(m,x)$ by recursion.
For $m=0$ the polynomial $P(0,x)=0$ is fixed by property (i).
For $m=1$, by (ii) we know $P(1,x)=c$ is a constant polynomial the constant $c$ being given by $P(1,0)=c$. By (iv) we see
\begin{equation*}
 P(1,0)+(-1)^{1+0}\cdot 1\:=\: P(0,1)+(-1)^{0+1}\cdot 0\:,
\end{equation*}
so $c=P(0,1)+1=1$. Assuming $P(k,x)$ to be constructed for $0\leq k\leq m$ we obtain by property (iv) the following values of $P(m+1,x)$
\begin{equation*}
 P(m+1,k)\:=\:P(k,m+1)+(-1)^{m+k}(m+1-k)\:.
\end{equation*}
Using (iii) we find $P(m+1,-k)=P(m+1,k)$ and we thus have fixed the values $P(m+1,x)$ at the $2m+1$ places $x\in\{-m,\dots,0,\dots,m\}$.
But by (ii) the degree of $P(m+1,x)$ is at most $2m$, hence $P(m+1,x)$ is the unique interpolation polynomial of degree $2m$ for the above values. 
\end{proof}
For example, condition (iv) together with (i) implies
\begin{equation*}
 P(m,0)\:=\:(-1)^{m-1}\cdot m\:,
\end{equation*}
as well as
\begin{equation*}
 P(m,1)\:=\:1+(-1)^m(m-1)\:.
\end{equation*}
In particular 
\begin{small}
\begin{align*}
 P(0,x)&=0\:,\\
 P(1,x)&=1\:,\\
 P(2,x)&=4x^2-2\:,\\
 P(3,x)&=4x^4-8x^2+3\:,\\
 P(4,x)&= \frac{16}{9}x^6 - \frac{56}{9}x^4 + \frac{112}{9}x^2 - 4\:,\\
 P(5,x)&=\frac{4}{9}x^8 - \frac{16}{9}x^6 + \frac{92}{9}x^4 - \frac{152}{9}x^2 + 5\:,\\
 P(6,x)&= \frac{16}{225}x^{10} - \frac{8}{45}x^8 + \frac{848}{225}x^6 - \frac{592}{45}x^4 + \frac{1612}{75}x^2 - 6\:,\\
 P(7,x)&= \frac{16}{2025}x^{12} + \frac{32}{2025}x^{10} + \frac{596}{675}x^8 - \frac{7984}{2025}x^6 + \frac{34696}{2025}x^4 - \frac{5872}{225}x^2 + 7\:,\\
 P(8,x)&=\frac{64}{99225}x^{14} + \frac{32}{4725}x^{12} + \frac{64}{405}x^{10} - \frac{46384}{99225}x^8 + \frac{27968}{4725}x^6 - \frac{41312}{2025}x^4 + \frac{339392}{11025}x^2 - 8\:.\\
\end{align*}
\end{small}
The proof of Proposition~\ref{definierende_eigenschaften} shows that the values $P(m,k)$ for all $k=-m,\dots,m$ are integers for all $m\geq 0$. Hence by (iv), 
for an integer $j>0$
the value
\begin{equation*}
 P(m,m+j)\:=\:P(m+j,m)+(-1)^j\cdot j
\end{equation*}
also is integral. This proves
\begin{cor}\label{corrolar_1}
The function $P\::\:\NN\times\NN \to \ZZ$
defined by the values $P(m,n)$ on natural numbers of the family
 of polynomials $P(m,x)$ in Proposition~\ref{definierende_eigenschaften} is integer-valued.
\end{cor}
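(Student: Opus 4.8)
The plan is to run the recursion underlying Proposition~\ref{definierende_eigenschaften} while tracking integrality, the one extra input being the classical fact on integer-valued polynomials: if $g\in\QQ[x]$ has $\deg g\le d$ and $g$ takes integer values at $d+1$ consecutive integers, then $g$ takes integer values at every integer. I would recall its standard proof: writing $a$ for the first of the $d+1$ consecutive integers and expanding $g$ in the basis $\binom{x-a}{0},\dots,\binom{x-a}{d}$, the coefficients are the iterated forward differences $(\Delta^{j}g)(a)$, which are integral since they are $\ZZ$-linear combinations of the integral values $g(a),\dots,g(a+j)$; and each $\binom{x-a}{j}$ is itself integer-valued.

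First I would prove, by induction on $m\ge 0$, that $P(m,x)$ takes an integer value at every integer. The cases $m=0$ and $m=1$ are clear since $P(0,x)=0$ and $P(1,x)=1$. For the inductive step, assume $P(k,x)$ is integer-valued on $\ZZ$ for every $k\le m$. By the proof of Proposition~\ref{definierende_eigenschaften}, $P(m+1,x)$ is the interpolation polynomial of degree $\le 2m$ determined by the values
\begin{equation*}
 P(m+1,k)\:=\:P(k,m+1)+(-1)^{m+k}(m+1-k)\qquad(0\le k\le m)
\end{equation*}
together with $P(m+1,-k)=P(m+1,k)$ from property (iii); these are $2m+1$ values at the consecutive integers $-m,\dots,m$. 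By the inductive hypothesis each $P(k,m+1)$ with $0\le k\le m$ is an integer, hence so is each of these $2m+1$ node values, and the classical fact above then forces $P(m+1,x)$ to be integer-valued on all of $\ZZ$. This completes the induction, and the corollary follows at once upon restricting to $m,n\in\NN$. (Alternatively, once integrality at integers is known one may re-derive the conclusion from property (iv): for $n>m$ one has $P(m,n)=P(n,m)+(-1)^{\,n-m}(n-m)$, and $P(n,m)\in\ZZ$ since $|m|\le n$.)

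The argument is short, and I do not expect a serious obstacle; the only point that needs care is the formulation of the induction hypothesis. It must assert integrality of $P(k,x)$ at \emph{all} integers, not merely at the $2k-1$ interpolation nodes $-(k-1),\dots,k-1$ used to construct $P(k,x)$, because the recursion evaluates $P(k,\cdot)$ at the point $m+1$, which for $k<m+1$ lies strictly outside that node set. Bridging this gap is precisely the role of the integer-valued-polynomial fact, applied at each earlier stage of the induction; with that in hand the rest is bookkeeping.
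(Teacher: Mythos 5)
Your proof is correct and follows the same route as the paper: run the defining recursion of Proposition~\ref{definierende_eigenschaften} and use the near-symmetry (iv). The only difference is that you make explicit the classical integer-valued-polynomial lemma (integrality at $\deg+1$ consecutive integers implies integrality on all of $\ZZ$), and this is in fact the right thing to do: the recursion expresses the node values of $P(m+1,\cdot)$ through $P(k,m+1)$ with $m+1$ lying \emph{outside} the node set $\{-(k-1),\dots,k-1\}$ of $P(k,\cdot)$, so the paper's one-line assertion that the recursion yields integer node values silently relies on exactly this lemma (trying instead to rewrite $P(k,m+1)$ via property (iv) as $P(m+1,k)+(-1)^{m+1-k}(m+1-k)$ would be circular). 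Your parenthetical remark at the end, handling $n>m$ by $P(m,n)=P(n,m)+(-1)^{n-m}(n-m)$ with $P(n,m)$ a node value of $P(n,\cdot)$, is precisely the paper's second step, so the two arguments coincide once your lemma is granted.
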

 Let $m$ be a natural number. For  integers $0\leq\mu\leq m-1 $ put
\begin{equation*}
 \mu^\ast\:=\: m-1-\mu\:.
\end{equation*}
For integers $0\leq \nu,\mu\leq m-1$ we define the polynomials
\begin{align*}
 t(\nu,\mu,m;x)&=\prod_{k=1}^{\mu}(x+\nu-\mu+k)\cdot\prod_{l=1}^{\nu}(x-1-\mu+l)\\
 &=(x+\nu)\cdots(x+\nu-\mu+1)\cdot(x+\nu-\mu-1)\cdots(x-\mu)\:.
\end{align*}
\begin{table}
\caption{Initial values of the function $P:\NN\times\NN\to\ZZ$.}
\begin{small}
\begin{tabular}{c|c|c|c|c|c|c|c|c}
 $P(m,n)$&$1$&$2$&$3$&$4$&$5$&$6$&$7$&$8$\\
  \hline
  $1$&$1$&$1$&$1$&$1$&$1$&$1$&$1$&$1$\\
   \hline
  $2$&$2$&$14$&$34$&$62$&$98$&$142$&$194$&$254$\\
 \hline
 $3$&$-1$&$35$&$255$&$899$&$2303$&$4899$&$9215$&$15875$\\
 \hline 
 $4$&$4$&$60$&$900$&$5884$&$24196$&$75324$&$194820$&$441340$\\
  \hline
 $5$&$-3$&$101$&$2301$&$24197$&$151805$&$676197$&$2376701$&$7031301$\\
 \hline
 $6$&$6$&$138$&$4902$&$75322$&$676198$&$4160778$& $19475142$&$74307834$\\
  \hline
 $7$&$-5$&$199$&$9211$&$194823$&$2376699$&$19475143$&$118493179$&$573785095$\\
 \hline
 $8$&$8$&$248$&$15880$&$441336$&$7031304$&$74307832$&$573785096$&$3465441272$
\end{tabular}
\end{small}
\end{table}
\begin{prop}\label{the_polynomials}
 The polynomials
 \begin{equation*}
  P(m,x)\:=\:\sum_{\nu,\mu=0}^{m-1}\frac{t(\nu,\mu,m;x)\cdot t(\mu^\ast,\nu^\ast,m;x)}{\nu!\nu^\ast !\mu!\mu^\ast !}
 \end{equation*}
for $m>0$, and $P(0,x)=0$ satisfy the properties of Proposition~\ref{definierende_eigenschaften}.
\end{prop}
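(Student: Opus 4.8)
The plan is to verify the four defining properties (i)--(iv) of Proposition~\ref{definierende_eigenschaften} directly for the closed formula, since by that proposition these properties pin down the family uniquely. Property (i) is the stated convention $P(0,x)=0$, so there is nothing to check there. For property (ii), I would count degrees: each factor $t(\nu,\mu,m;x)$ is a product of $\mu+\nu$ linear factors (the two sub-products have $\mu$ and $\nu$ factors), hence $\deg t(\nu,\mu,m;x)=\mu+\nu$, and likewise $\deg t(\mu^\ast,\nu^\ast,m;x)=\mu^\ast+\nu^\ast=(m-1-\mu)+(m-1-\nu)$. So each summand has degree exactly $(\mu+\nu)+(\mu^\ast+\nu^\ast)=2(m-1)$, and therefore $\deg_x P(m,x)\le 2(m-1)$. (One should note the degree can drop, as happens in the examples where only even powers appear; but the inequality is all that is claimed.)

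For property (iii), the evenness $P(m,x)=P(m,-x)$, I would exhibit an involution on the index set $\{0,\dots,m-1\}^2$ that pairs each term $t(\nu,\mu,m;x)\,t(\mu^\ast,\nu^\ast,m;x)$ with the term obtained by $x\mapsto -x$. Reading off the roots: $t(\nu,\mu,m;x)$ has roots $-\nu,\dots,-(\nu-\mu+1)$ together with $-(\nu-\mu-1),\dots,\mu$, i.e. the set $\{-\nu,\dots,\mu\}\setminus\{-(\nu-\mu)\}$ of size $\mu+\nu$. Negating $x$ sends this to the root set $\{-\mu,\dots,\nu\}\setminus\{\nu-\mu\}$, which is exactly the root set of $t(\mu,\nu,m;-x)$ up to sign of the leading coefficient — more precisely $t(\nu,\mu,m;-x)=(-1)^{\mu+\nu}t(\mu,\nu,m;x)$. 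Applying this to both factors of a summand, the sign is $(-1)^{(\mu+\nu)+(\mu^\ast+\nu^\ast)}=(-1)^{2(m-1)}=1$, and the substitution $(\nu,\mu)\mapsto(\mu,\nu)$ (which also swaps $\nu^\ast\leftrightarrow\mu^\ast$) permutes the summation range and fixes the denominator $\nu!\,\nu^\ast!\,\mu!\,\mu^\ast!$. Hence $P(m,-x)=P(m,x)$.

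Property (iv) — the near-symmetry $P(m,n)+(-1)^{m+n}m=P(n,m)+(-1)^{m+n}n$ on $\NN\times\NN$ — is the genuine obstacle, and it is where I expect the combinatorics to be delicate. The cleanest route is to evaluate $P(m,n)$ at a positive integer $x=n$ and recognize the resulting double sum. Each factor $t(\nu,\mu,m;n)$ is, up to the explicit factorials in the denominator, a ratio of factorials (a product of consecutive integers with one gap), so after dividing by $\nu!\nu^\ast!\mu!\mu^\ast!$ one should be able to rewrite $P(m,n)$ as a sum of products of binomial coefficients, and then match it against the closed form $A(n,m)-$boundary correction built from $a(k,l)=\sum_\nu\binom{l}{\nu}\binom{l-1+k-\nu}{l-1}$ appearing in the introduction. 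In practice I would try to show first that the family defined by the formula satisfies the second-variable summation equation $S^2P(m,x)=A_2(x,m)$ together with evenness and the degree bound, which (as the introduction explains) already forces property (iv) through the constants $c_0,c_1$; the telescoping identity $S^2 t(\nu,\mu,m;x)=\text{(shifted }t\text{'s)}$ combined with a Vandermonde-type convolution on $\nu$ is the computational heart. Alternatively, and perhaps more robustly, one checks (iv) at the $2m+1$ interpolation nodes used in the proof of Proposition~\ref{definierende_eigenschaften}: by (ii) and (iii) it suffices to verify $P(m,n)=P(n,m)+(-1)^{m+n}(n-m)$ for $0\le n\le m-1$, a finite range where the factor $t(\mu^\ast,\nu^\ast,m;n)$ has many zero factors, collapsing the double sum dramatically and making the identity amenable to a direct binomial manipulation. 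Either way, the main work — and the step most likely to require a clever reindexing or a known hypergeometric evaluation — is this verification of the near-symmetry (iv).
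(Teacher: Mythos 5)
Your verifications of properties (i)--(iii) are correct and coincide with the paper's: the degree count $\deg t(\nu,\mu,m;x)=\nu+\mu$ gives (ii), and the identity $t(\nu,\mu,m;-x)=(-1)^{\nu+\mu}t(\mu,\nu,m;x)$ together with the swap $(\nu,\mu)\mapsto(\mu,\nu)$ (which preserves the denominator) gives (iii) exactly as in the paper.

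The genuine gap is property (iv), which you correctly identify as the crux but do not actually prove: you offer two candidate strategies without executing either. Your first route (prove $S^2P(m,x)=A_2(x,m)$ and deduce (iv)) is problematic beyond being unexecuted: in the paper the logical order is reversed --- the second-variable summation equation is \emph{deduced from} the nearby symmetry in Theorem~\ref{thm_summation_operator} --- and even granting both summation equations you would only get $P(m,n)=P(n,m)+(-1)^m(c_1(n)m+c_0(n))$ with undetermined $c_0,c_1$, so additional work is needed to pin down the constants. Your second route is essentially the paper's argument, and the missing content is precisely the ``direct binomial manipulation'' you defer. Concretely: assuming $n<m$, one first substitutes $\mu\mapsto\mu^\ast$ so that $P(m,n)=\sum\frac{t(\nu,\mu^\ast,m;n)\,t(\mu,\nu^\ast,m;n)}{\nu!\nu^\ast!\mu!\mu^\ast!}$; then one observes that for $n\le\mu^\ast$ the factor $t(\nu,\mu^\ast,m;n)$ vanishes unless $n+\nu-\mu^\ast=0$, in which case it equals $(-1)^\nu\nu!\mu^\ast!$ (and similarly for the other factor). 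The surviving exceptional terms lie on the diagonal $\nu+\mu=m-1-n$ and each contributes $(-1)^{\nu+\mu}$, summing to $(-1)^{m+n-1}(m-n)$; the remaining sum, restricted to $\nu^\ast,\mu^\ast\le n-1$ and reindexed by $i=m-1-\nu$, $j=m-1-\mu$, is recognized as \emph{literally} the defining formula for $P(n,m)$. This last recognition --- that the truncated, reindexed double sum is $P(n,m)$ with the roles of $m$ and $n$ exchanged --- is the key step of the whole proposition and is absent from your proposal, so as it stands the proof is incomplete.
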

\begin{proof}[Proof of Proposition~\ref{the_polynomials}]
 By definition, condition (i) of Proposition~\ref{definierende_eigenschaften} is satisfied.
 For the summands of $P(m,x)$ we have for all $\nu,\mu$
 \begin{equation*}
  \deg_x t(\nu,\mu,m;x)\cdot t(\mu^\ast,\nu^\ast,m;x)\:=\:\nu+\mu+\nu^\ast+\mu^\ast\:=\: 2(m-1)\:,
 \end{equation*}
so the same holds true for $P(m,x)$. Hence property (ii) holds.
Obviously,
\begin{equation*}
 t(\nu,\mu,m;-x)\:=\:(-1)^{\nu+\mu}\cdot t(\mu,\nu,m;x)\:,
\end{equation*}
so condition (iii) follows
\begin{align*}
P(m,-x)
&= \sum_{\mu,\nu=0}^{m-1}(-1)^{2(m-1)}\frac{t(\mu,\nu,m;x)\cdot t(\nu^\ast,\mu^\ast,m;x)}{\nu!\nu^\ast !\mu!\mu^\ast !}\:=\:P(m,x)\:.\\
\end{align*}
In order to prove (iv), which is trivial for $m=n$, we assume  $n> m$ without loss of generality. Substituting $\mu\mapsto m-1-\mu$ we may write
\begin{equation*}
 P(m,n)\:=\: \sum_{\nu^\ast,\mu\ast=0}^{m-1}\frac{t(\nu,\mu^\ast,m;n)\cdot t(\mu,\nu^\ast,m;n)}{\nu!\nu^\ast !\mu!\mu^\ast !}\:.
\end{equation*}
Notice that for $n\leq \mu^\ast$ the value
\begin{equation*}
 t(\nu,\mu^\ast,m;n)\:=\:(n+\nu)\cdots(n+\nu-\mu^\ast+1)\cdot(n+\nu-\mu^\ast-1)\cdots(n-\mu^\ast)
\end{equation*}
is zero unless $n+\nu-\mu^\ast=0$, where the value is $(-1)^\nu \nu!\mu^\ast!$. 
Similarly, $ t(\nu,\mu^\ast,m;n)$ is zero for $n\leq \nu^\ast$ unless $n+\nu-\mu^\ast=0$, in which case it is $(-1)^\mu \mu!\nu^\ast!$. So we obtain
\begin{equation*}
 P(m,n) \:=\:\sum_{\nu^\ast,\mu\ast=0}^{n-1}\frac{t(\nu,\mu^\ast,m;n)\cdot t(\mu,\nu^\ast,m;n)}{\nu!\nu^\ast !\mu!\mu^\ast !}\:+\:\sum_{\nu+\mu=m-1-n}(-1)^{\nu+\mu}\:.
\end{equation*}
In this expression, the second sum is $(-1)^{m+n-1}(m-n)$. Substituting $i=m-1-\nu$ and $j=m-1-\mu$  the first sum  becomes
\begin{equation*}
 \sum_{i,j=0}^{n-1}\frac{t(i,n-1-j,n;m)t(j,n-1-i,n;m)}{i!(n-1-i)!j!(n-1-j)!}\:=\: P(n,m)\:.
\end{equation*}
So for $n>m$
\begin{equation*}
 P(m,n)\:=\:P(n,m)+(-1)^{m+n-1}(m-n)\:.
\end{equation*}
Hence condition (iv) of Proposition~\ref{definierende_eigenschaften} holds for all integers $m,n>0$.
\end{proof}
\begin{defn}\label{Weyl-dimension} 
For integers $\alpha$ and $\beta$ define the natural number 
 \begin{equation*}
 D_n(\alpha+1,\beta)\:=\: \left\{\begin{array}{ll}\frac{n}{\alpha+\beta+1}\binom{n+\alpha}{\alpha}\binom{n-1}{\beta}&\textrm{ if } \alpha\geq 0\textrm{ and } 0\leq\beta\leq n-1\\
                                0& \textrm{else }
                               \end{array}\right..
\end{equation*}
\end{defn}
\begin{remark}\label{remarks}
i)
  We have seen $\deg_xP(m,x)=2(m-1)$. So property (ii) of Proposition~\ref{definierende_eigenschaften} can be sharpened as
\begin{itemize}
\item [(ii')] $\deg_x P(m,x)= 2(m-1)$ for all $m>0$.
\end{itemize}

ii)
Fixing the first variable,  the function $P(m,x)$ is polynomial in $x$ by definition.
By property (iv) of Proposition~\ref{definierende_eigenschaften} the values  $P(m,n)$  are nearly symmetric
\begin{equation*}
P(n,m)\:=\:P(m,n)+(-1)^{m+n}(m-n)\:,
\end{equation*}
Hence  for fixed $n\in\NN$ the function $P(m,n)$ is almost a polynomial  of degree $2(n-1)$ in the first variable $m$.

iii)
For integers $n>0$ there is an appealing presentation of $t(\nu,\mu,m;n)$
\begin{equation*}
 \frac{t(\nu,\mu,m;n)}{\nu!\mu!}\:=\:\left\{\begin{array}{ll}\frac{n}{n+\nu-\mu}\binom{n+\nu}{\nu}\binom{n-1}{\mu}& \textrm{if } n+\nu-\mu\not=0\\
                                             (-1)^\nu&\textrm{if } n+\nu-\mu=0
                                            \end{array}\right.\:.
\end{equation*}
In particular, for integers $n\geq m>0$ we obtain
\begin{equation*}
 P(m,n)\:=\:\sum_{\nu,\mu=0}^{m-1}\frac{n^2}{(n+\nu-\mu)^2}\binom{n+\nu}{\nu}\binom{n-1}{\mu}\binom{n+(m-1-\mu)}{m-1-\mu}\binom{n-1}{m-1-\nu}\:.
\end{equation*}
Equivalently, using Definition~\ref{Weyl-dimension}  for integers $n\geq m>0$
\begin{equation*}
 P(m,n)\:=\:\sum_{\nu,\mu=0}^{m-1}D_n\big(\nu+1,n-1-\mu)\cdot D_n((m-1-\mu)+1,n-1-(m-1-\nu)\bigr)\:.
\end{equation*}
As of two natural numbers, any single summand of $P(m,n)$ is a natural number for the integers $n\geq m>0$. 
Hence 
the values $P(m,n)$ are natural numbers for all integers $n\geq m$.
In general,   for integers $m,n>0$ define the numbers
\begin{equation*}
 \widetilde P(m,n)\:=\!\!\!\!\sum_{\nu,\mu=0}^{\min\{n-1,m-1\}}\!\!\!\!\!\!\!\!D_n\big(\nu+1,n-1-\mu)\cdot D_n((m-1-\mu)+1,n-1-(m-1-\nu)\bigr)\:.
\end{equation*}
Hence   $P(m,n)=\widetilde P(m,n)$ holds for $n\geq m>0$, whereas for $n<m$ we obtain
\begin{equation*}
 P(m,n)\:=\: \widetilde P(m,n) +(-1)^{m+n-1}(m-n)\:.
\end{equation*}
On the other hand, we know $P(m,n)=P(n,m)+(-1)^{m+n-1}(m-n)$ by property (iv). It follows for $n<m$
\begin{equation*}
 \widetilde P(m,n)\:=\: P(n,m)\:=\: \widetilde P(n,m)\:.
\end{equation*}
Hence $\widetilde P(m,n)$ is symmetric.
\end{remark}

\section{Summation operators}\label{sec_summation_op}
   We define the summation operator $S$ acting on a function $f$ 
 \begin{equation*}
  S f(x)\:=\: f(x+\frac{1}{2})+f(x-\frac{1}{2})\:.
 \end{equation*}
 On polynomials $S$ acts by $ S(x^n)=\sum_{k=0}^n\binom{n}{k}x^{n-k}2^{-k}(1+(-1)^k)$. The  preimages $S^{-1}(x^n)$ can be obtained uniquely by recursion starting with $S^{-1}(0)=0$
 and $S^{-1}(1)=\frac{1}{2}$.
 Hence   $S$ is bijective on polynomial rings over fields of characteristic $\neq 2$.
 Noticing
 \begin{equation*}
  S^2f(x)\:=\:f(x+1)+2\cdot f(x)+f(x-1)\:,
 \end{equation*}
there is a natural understanding of the definition of the summation operator $S^2$ on functions $f$ on the integers, i.e. on sequences. 

Let denote 
 \begin{equation*}
  \begin{bmatrix}
   x\\k
  \end{bmatrix}\:=\:\frac{x(x-1)\cdots (x-k+1)}{k!}
 \end{equation*}
the polynomial of degree $k$ interpolating  the binomial coefficient $\begin{pmatrix}
   n\\k
  \end{pmatrix}=\begin{bmatrix}
   n\\k
  \end{bmatrix}$ for integers $n\geq 0$. In particular, $\begin{bmatrix}   n\\0  \end{bmatrix}=1$ is the constant polynomial.
  
   The following theorem shows that the images of the summation operator $S^2$ applied to the nearly symmetric function $P(m,n)$ of Corollary~\ref{corrolar_1}, 
   once in the first and once in the second variable, behave similarly.
\begin{thm}\label{thm_summation_operator}
Let $P:\NN\times\NN\to\ZZ$ be the function defined in Corollary~\ref{corrolar_1}.
Let $S_1$ and $S_2$ be the summation operators  in the first and second variable, respectively, and consider the function $A:\NN\times\NN\to\NN$ given by
\begin{equation}\label{summation_equation_to_show_discrete}
  S_1^2 P(m,n) \:=\: P(m+1,n)+2\cdot P(m,n)+P(m-1,n)\:=\: A(m,n)\:.
  \end{equation}
  Then it holds 
 \begin{equation}\label{summation_equation_2_to_show_discrete}
  S_2^2 P(m,n) \:=\: P(m,n+1)+2\cdot P(m,n)+P(m,n-1)\:=\: A(n,m)\:.
  \end{equation} 
For all integers $m=1,2,\dots$ define  polynomials of degree $2m$
\begin{equation*}
 A_1(m,x)\:=\:  \left(\sum_{\nu=0}^m \begin{bmatrix}
   x-1+\nu\\\nu
  \end{bmatrix}
  \begin{bmatrix}
   x\\m-\nu
  \end{bmatrix}
\right)^2\:.
\end{equation*}
For all integers $n=1,2,\dots$ define polynomials of degree $2(n-1)$
\begin{equation*}
 A_2(x,n)\:=\:  \left(\sum_{\nu=0}^n \begin{pmatrix}
   n\\\nu
  \end{pmatrix}\begin{bmatrix}
   x-\nu+n-1\\n-1
  \end{bmatrix}
\right)^2\:.
\end{equation*}
Then for all integers $n,m>0$ the following holds
\begin{equation*}
 A_1(m,n)\:=\: A(m,n)\:=\:A_2(m,n)\:.
\end{equation*}
Let $P(m,x)$ be the family  of polynomials of Proposition~\ref{definierende_eigenschaften}  defining the numbers $P(m,n)$.
Then for all integers $m>0$ 
 \begin{equation}\label{summation_equation_to_show}
  S_1^2 P(m,x) \:=\: P(m+1,x)+2\cdot P(m,x)+P(m-1,x)\:=\: A_1(m,x)\:.
  \end{equation}
 Furthermore, for all integers $m>0$ we have
 \begin{equation}\label{summation_equation_2_to_show}
  S_2^2 P(n,x) \:=\: P(n,x+1)+2\cdot P(n,x)+P(n,x-1)\:=\: A_2(x,n)\:.
  \end{equation} 
\end{thm}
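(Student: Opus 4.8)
The plan is to break the theorem into three parts: (A)~the numerical identities $A_1(m,n)=A(m,n)=A_2(m,n)$ at integer arguments; (B)~the polynomial identity \eqref{summation_equation_to_show}, which is the substantive one; and (C)~the deduction of \eqref{summation_equation_to_show_discrete}, \eqref{summation_equation_2_to_show} and \eqref{summation_equation_2_to_show_discrete} from parts (A), (B) and the near-symmetry of Proposition~\ref{definierende_eigenschaften}(iv).

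For part (A) I would check that all three inner sums equal the single coefficient $[z^{m}]\bigl(\tfrac{1+z}{1-z}\bigr)^{n}$. From $\sum_{k\ge0}\binom{n-1+k}{k}z^{k}=(1-z)^{-n}$ and $\sum_{k\ge0}\binom{n}{k}z^{k}=(1+z)^{n}$ one gets $a(m,n)=\sum_{\nu}\binom{n}{\nu}\binom{n-1+m-\nu}{m-\nu}=[z^{m}](1+z)^{n}(1-z)^{-n}$; the inner sum of $A_1(m,n)$ becomes this same Cauchy product after the substitution $\nu\mapsto m-\nu$, and the inner sum of $A_2(m,n)$ is literally that Cauchy product, the discrepancy in summation range being harmless since the relevant binomials vanish outside it. Squaring gives part (A).

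For part (B): both sides of \eqref{summation_equation_to_show} are polynomials in $x$, so it suffices to prove $S_1^{2}P(m,n)=A_1(m,n)$ for all integers $n\ge m+1$. For such $n$, Remark~\ref{remarks}(iii) supplies, with no correction term, $P(k,n)=\sum_{\nu,\mu=0}^{k-1}T(\nu,\mu)\,T(k-1-\mu,\,k-1-\nu)$ for $k=m-1,m,m+1$, where $T(a,b):=t(a,b,m;n)/(a!\,b!)$ (and $t(\cdot,\cdot,m;\cdot)$ does not actually depend on the slot~$m$); moreover $T(a,0)=\binom{n+a-1}{a}$ and $T(0,b)=\binom{n}{b}$, so the inner sum of $A_1(m,n)$ is $g(m,n):=\sum_{a+b=m}T(a,0)\,T(0,b)$. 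Thus \eqref{summation_equation_to_show} evaluated at $x=n$ becomes the identity
\[
 \sum_{j=0}^{2}\binom{2}{j}\sum_{\substack{a+a^{\ast}=b+b^{\ast}=m-2+j\\ a,a^{\ast},b,b^{\ast}\ge0}}T(a,b)\,T(b^{\ast},a^{\ast})\;=\;g(m,n)^{2}.
\]
This is the counting step, and the point where the ``counting arguments'' announced in the introduction are needed. I would attack it by generating functions: $g(m,n)=[z^{m}]\bigl(\tfrac{1+z}{1-z}\bigr)^{n}$, while the doubly-indexed sums defining the $P(k,n)$ can be expressed through the two-variable series $\sum_{a,b\ge0}T(a,b)p^{a}q^{b}$ (for which the product formulas for $t$ provide a closed form), after which the weights $1,2,1$ of $S_1^{2}$ furnish exactly the reorganization --- a Vandermonde/Chu type convolution among the $t$-products --- that collapses the left-hand side to the square $g(m,n)^{2}$. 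I expect this to be the main obstacle: matching the genuinely two-index sums coming from $P$ against the square of the one-index sum $g$. Everything else is bookkeeping.

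For part (C): evaluating \eqref{summation_equation_to_show} at $x=n$ and using $A_1(m,n)=A(m,n)$ from part (A) yields \eqref{summation_equation_to_show_discrete}. Next, Proposition~\ref{definierende_eigenschaften}(iv) reads $P(m,n)-P(n,m)=(-1)^{m+n}(n-m)$ for all integers $m,n>0$, and this stays valid with the convention $P(0,x)=0$, since then $P(0,n)=0=P(n,0)+(-1)^{n}n$. Applying $S_1^{2}$ in the first variable and using $(n-m+1)-2(n-m)+(n-m-1)=0$, the correction telescopes away and
\[
 S_1^{2}P(m,n)=P(n,m+1)+2P(n,m)+P(n,m-1)=S_2^{2}P(n,x)\big|_{x=m}
\]
for all integers $m,n>0$. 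Combined with \eqref{summation_equation_to_show_discrete} and $A(m,n)=A_2(m,n)$ from part (A), this gives $S_2^{2}P(n,x)\big|_{x=m}=A_2(m,n)$ for every integer $m>0$; since both sides are polynomials in $x$ agreeing at infinitely many integers, \eqref{summation_equation_2_to_show} follows, and its evaluation at integral $x$ (after renaming variables) is \eqref{summation_equation_2_to_show_discrete}.
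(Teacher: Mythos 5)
Your overall architecture is the same as the paper's: prove the first--variable identity \eqref{summation_equation_to_show} at integer points $n>m$ and conclude by polynomiality, then transfer to the second variable via the near--symmetry of Proposition~\ref{definierende_eigenschaften}(iv). Your part (A) (generating functions for $[z^m]\bigl(\tfrac{1+z}{1-z}\bigr)^n$) and part (C) (the $1,2,1$ weights killing the $(-1)^{m+n}(n-m)$ correction) are correct and match the paper's reasoning.

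The gap is in part (B), which you yourself flag as ``the main obstacle'': you reduce the theorem to the identity
\begin{equation*}
\sum_{j=0}^{2}\binom{2}{j}\sum_{a+a^{\ast}=b+b^{\ast}=m-2+j}T(a,b)\,T(b^{\ast},a^{\ast})\:=\:g(m,n)^{2},
\end{equation*}
and then only announce that generating functions ``should'' collapse the left side, without carrying this out. That step is the actual content of the theorem, and the proposed route is not obviously viable: each summand $T(a,b)T(b^{\ast},a^{\ast})$ carries the coupled factor $\tfrac{n^{2}}{(n+a-b)^{2}}$ (since $b^{\ast}-a^{\ast}=a-b$), so the double sum is \emph{not} a product of two independent one--variable series, and no Vandermonde/Chu convolution applies directly. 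The paper resolves this not by generating functions but by a term--by--term matching (Lemma~\ref{lemma_summation}): after the index shifts $\nu\mapsto\nu+1$, $\mu\mapsto\mu+1$ in two of the three sums, the four resulting products $D_n(\cdot)D_n(\cdot)$ attached to a fixed pair $(\nu,\mu)$ (with Kronecker--delta corrections at the boundary $\nu,\mu\in\{0,m\}$) are shown by direct binomial manipulation to sum exactly to the single term $\binom{n-1+\nu}{\nu}\binom{n}{m-\nu}\binom{n-1+m-\mu}{m-\mu}\binom{n}{\mu}$ of the expanded square $A_1(m,n)$. You would need to either reproduce this local identity or find a genuine generating--function substitute for it; as written, the central claim is unproved.
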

\begin{table}
\caption{Initial values of the function $A:\NN\times\NN\to\NN$.}
\begin{small}
\begin{tabular}{c|c|c|c|c|c|c|c|c}
 $A(m,n)$&$1$&$2$&$3$&$4$&$5$&$6$&$7$&$8$\\
  \hline
  $1$&$4$&$16$&$36$&$64$&$100$&$144$&$196$&$256$\\
   \hline
  $2$&$4$&$64$&$324$&$1024$&$2500$&$5184$&$9604$&$16384$\\
 \hline
 $3$&$4$&$144$&$1444$&$7744$&$28900$&$85264$&$213444$&$473344$\\
 \hline 
 $4$&$4$&$256$&$4356$&$36864$&$202500$&$831744$&$2775556$&$7929856$\\
  \hline
 $5$&$4$&$400$&$10404$&$129600$&$1004004$&$5588496$&$24423364$&$88811776$\\
 \hline
 $6$&$4$&$576$&$21316$&$369664$&$3880900$&$28472896$&$159820164$&$729432064$\\
  \hline
 $7$&$4$&$784$&$39204$&$906304$&$12460900$&$117418896$&$830246596$&$4687319296$\\
 \hline
 $8$&$4$&$1024$&$66564$&$1982464$&$34692100$&$410305536$&$3588728836$&$24706809856$
\end{tabular} 
\end{small}
\end{table}
Before we prove Theorem~\ref{thm_summation_operator} let us include the following remarks.
Consider the symmetric numbers $\widetilde P(m,n)$ defined in Remark~\ref{remarks}(iii). For $n>m$ they satisfy $\widetilde P(m+i,n)=P(m+i,n)$ for $i=-1,0,1$. Similarly for $n<m$, they satisfy
$\widetilde P(m+i,n)=P(n,m+i)=P(m+i,n)+(-1)^{m+i+n}(m+i-n)$ for $i=-1,0,1$ by Proposition~\ref{definierende_eigenschaften}~(iv).
For $m=n$ we obtain $\widetilde P(n+i,n)=P(n+i,n)$ for $i=-1,0$, but $\widetilde P(n+1,n)=P(n+1,n)-1$.
By Theorem~\ref{thm_summation_operator} it follows for all $m\not=n$
\begin{equation*}
 \widetilde P(m+1,n)+2\cdot\widetilde P(m,n)+\widetilde P(m-1,n)\:=\: A(m,n)\:,
\end{equation*}
whereas 
\begin{equation*}
 \widetilde P(n+1,n)+2\cdot\widetilde P(n,n)+\widetilde P(n-1,n)+1\:=\: A(n,n)\:.
\end{equation*}
This proves the following corollary.
\begin{cor}
 For all integers $m,n>0$ the symmetric numbers
\begin{equation*}
  \widetilde P(m,n)\:=\!\!\!\!\!\!\!\!\sum_{\nu,\mu=0}^{\min\{n-1,m-1\}}\!\!\!\!\!\!\!\!\frac{n^2}{(n+\nu-\mu)^2}\binom{n+\nu}{\nu}\binom{n-1}{\mu}\binom{n+m-1-\mu}{m-1-\mu}\binom{n-1}{m-1-\nu}
\end{equation*}
satisfy  the summation equation
\begin{equation*}
S_1^2\widetilde P(m,n)+\delta_{m,n}\:=\: A(m,n)\:.
\end{equation*}
\end{cor}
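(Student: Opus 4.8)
The plan is to derive the corollary directly from Theorem~\ref{thm_summation_operator} via the relation between $\widetilde P$ and $P$ established in Remark~\ref{remarks}(iii). Recall from there that $\widetilde P$ is symmetric and that $\widetilde P(m,n)=P(m,n)$ whenever $n\geq m$; equivalently, $\widetilde P(m,n)=P\bigl(\min\{m,n\},\max\{m,n\}\bigr)$, so that $\widetilde P$ agrees with $P$ on and above the diagonal and differs from it by $(-1)^{m+n-1}(m-n)$ strictly below it. Since $S_1^2\widetilde P(m,n)$ only involves $\widetilde P(m-1,n)$, $\widetilde P(m,n)$ and $\widetilde P(m+1,n)$, one only has to keep track of which of the three arguments $m-1,m,m+1$ lies on, above, or below the diagonal relative to $n$ and to substitute the corresponding $P$-value.

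Concretely I would distinguish the cases $n>m$, $n<m$ and $n=m$. If $n>m$, then $m-1,m,m+1\leq n$, hence $\widetilde P(m+i,n)=P(m+i,n)$ for $i=-1,0,1$, so $S_1^2\widetilde P(m,n)=S_1^2P(m,n)=A(m,n)$ by~(\ref{summation_equation_to_show_discrete}), and $\delta_{m,n}=0$. If $n<m$, then $m-1\geq n$, so using the symmetry of $\widetilde P$ one has $\widetilde P(m+i,n)=\widetilde P(n,m+i)=P(n,m+i)$ for $i=-1,0,1$; therefore $S_1^2\widetilde P(m,n)=P(n,m+1)+2P(n,m)+P(n,m-1)=S_2^2P(n,m)=A(m,n)$ by~(\ref{summation_equation_2_to_show_discrete}) with the roles of the two variables interchanged, and again $\delta_{m,n}=0$. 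If $n=m$, the two lower terms are unchanged, $\widetilde P(n-1,n)=P(n-1,n)$ and $\widetilde P(n,n)=P(n,n)$, whereas $\widetilde P(n+1,n)=\widetilde P(n,n+1)=P(n,n+1)$; and Proposition~\ref{definierende_eigenschaften}(iv), applied to $f(n+1,n)=f(n,n+1)$, gives $P(n,n+1)=P(n+1,n)-1$. Hence $S_1^2\widetilde P(n,n)=S_1^2P(n,n)-1=A(n,n)-1$, so adding $\delta_{n,n}=1$ restores the equality.

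There is no substantial obstacle here: the whole content is carried by Theorem~\ref{thm_summation_operator} and the explicit description of $\widetilde P$, and what remains is bookkeeping. The only point that needs a moment's care is the boundary sub-cases where $m+1$ or $m-1$ equals $n$, where the two candidate expressions for that value of $\widetilde P$ must be checked to agree; they do, because the correction term $(-1)^{m+n-1}(m-n)$ vanishes on the diagonal. If one prefers to avoid the case split, one can encode the relation uniformly as $\widetilde P(m+i,n)=P(m+i,n)-(-1)^{m+i+n-1}(m+i-n)$ for those $i\in\{-1,0,1\}$ with $m+i>n$ (and $\widetilde P(m+i,n)=P(m+i,n)$ otherwise), apply $S_1^2$ in the first variable, and verify by a short computation that the sum of the resulting correction terms vanishes when $m\neq n$ and equals $-1=-\delta_{m,n}$ when $m=n$.
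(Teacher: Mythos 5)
Your argument is correct and coincides with the paper's own proof: the same case split $n>m$, $n<m$, $n=m$, the same substitution of $P$-values via Remark~\ref{remarks}(iii) and Proposition~\ref{definierende_eigenschaften}(iv) (in particular $\widetilde P(n+1,n)=P(n+1,n)-1$ producing the $\delta_{m,n}$ correction), and the same appeal to Theorem~\ref{thm_summation_operator} in each case. Nothing is missing.
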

We include a list of polynomials $A_1(m,x)$ and $A_2(x,n)$,
\begin{small}
\begin{align*}
A_1(1,x)&= 4x^2\:,&A_2(x,1)&= 4\:,\\
A_1(2,x)&= 4x^4\:,&A_2(x,2)&= 16x^2\:,\\
A_1(3,x)&= \frac{16}{9}x^2(x^2+\frac{1}{2})^2\:,&A_2(x,3)&= 16(x^2+\frac{1}{2})^2\:,\\
A_1(4,x)&= \frac{4}{9}x^2(x^2+2)^2\:,&A_2(x,4)&= \frac{64}{9}x^2(x^2+2)^2\:,\\
A_1(5,x)&= \frac{16}{225}x^2(x^4+5x^2+\frac{3}{2})^2\:,&A_2(x,5)&= \frac{16}{9}(x^4+5x^2+\frac{3}{2})^2\:,\\
A_1(6,x) &= \frac{16}{2025}x^4\bigl(x^{4} + 10x^2 + \frac{23}{2}\bigr)^2\:,&A_2(x,6) &= \frac{64}{225}x^2\bigl(x^{4} + 10x^2 + \frac{23}{2}\bigr)^2\:,\\
A_1(7,x)&= \frac{64}{99225}x^2\bigl(x^6 + \frac{35}{2}x^4 + 49x^2 + \frac{45}{4}\bigr)^2\:,&A_2(x,7)&= \frac{64}{2025}\bigl(x^6 + \frac{35}{2}x^4 + 49x^2 + \frac{45}{4}\bigr)^2\:,\\
A_1(8,x)&=\frac{4}{99225}x^4(x^2+6)^2\bigl( x^{4} + 22x^{2} + 22\bigr)^2\:,&A_2(x,8)&=\frac{256}{99225}x^2(x^2+6)^2\bigl( x^{4} + 22x^{2} + 22\bigr)^2\:.
\end{align*}
\end{small}
The polynomials $A_1(m,x)$ and $A_2(x,n)$ satisfy the following properties.
\begin{prop}\label{properties_A1_A2}
 \begin{itemize}
  \item [(a)] For all $m>0$ there is an identity of polynomials
  \begin{equation*}
   m^2\cdot A_1(m,x)\:=\:x^2\cdot A_2(x,m)\:.
  \end{equation*}
  In particular, $m^2A(m,n)=n^2A(n,m)$ is a  symmetric function on $\NN^2$.
\item[(b)] The polynomial 
\begin{equation*}
 A_2(x,n)\:=\: \frac{2^{2n}}{(n-1)!^2}\cdot x^{2(n-1)}+\dots+\left(1+(-1)^{n-1}\right)^2
\end{equation*}
of degree $2(n-1)$ is even $A_2(x,n)=A_2(-x,n)$. Its value at $x=1$ is $A_2(1,n)=4n^2$.
 \end{itemize}
\end{prop}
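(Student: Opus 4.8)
The plan is to establish part (a) first, since part (b) will then follow from it together with elementary manipulations. For part (a), the key observation is that both $A_1(m,x)$ and $A_2(x,m)$ are squares, so it suffices to prove the factorization identity at the level of the square roots, up to the sign/scaling encoded in the factors $m$ and $x$. Explicitly, writing
\begin{equation*}
 a_1(m,x)\:=\:\sum_{\nu=0}^m \begin{bmatrix} x-1+\nu\\\nu \end{bmatrix}\begin{bmatrix} x\\m-\nu \end{bmatrix}\:,\qquad
 a_2(x,m)\:=\:\sum_{\nu=0}^m \begin{pmatrix} m\\\nu \end{pmatrix}\begin{bmatrix} x-\nu+m-1\\m-1 \end{bmatrix}\:,
\end{equation*}
I would aim to show the polynomial identity $m\cdot a_1(m,x)=x\cdot a_2(x,m)$; squaring then gives (a). To prove this, I would reindex and use the Vandermonde-type convolution for the polynomials $\begin{bmatrix} \cdot\\\cdot \end{bmatrix}$, namely $\sum_{\nu}\begin{bmatrix} y\\\nu \end{bmatrix}\begin{bmatrix} z\\k-\nu \end{bmatrix}=\begin{bmatrix} y+z\\k \end{bmatrix}$, applied with a suitable shift of arguments. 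The term $\begin{bmatrix} x-1+\nu\\\nu \end{bmatrix}$ should be rewritten via the reflection $\begin{bmatrix} x-1+\nu\\\nu \end{bmatrix}=(-1)^\nu\begin{bmatrix} -x\\\nu \end{bmatrix}$, turning $a_1$ into an alternating convolution, while in $a_2$ the ordinary binomial $\binom{m}{\nu}$ can be absorbed similarly; matching the two after the substitution $\nu\mapsto m-\nu$ and extracting the factor $x$ versus $m$ from the "boundary" behavior of $\begin{bmatrix} x\\m-\nu \end{bmatrix}$ at $\nu=m$ should yield the claim.

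For part (b), three separate assertions must be checked: the leading coefficient, the evenness, and the value at $x=1$. The leading coefficient is immediate: in $a_2(x,m)$ each summand $\binom{m}{\nu}\begin{bmatrix} x-\nu+m-1\\m-1 \end{bmatrix}$ contributes leading term $\binom{m}{\nu}\frac{x^{m-1}}{(m-1)!}$, and summing over $\nu$ gives $\frac{2^m}{(m-1)!}x^{m-1}$; squaring gives the stated $\frac{2^{2m}}{(m-1)!^2}x^{2(m-1)}$. The value at $x=1$ is $a_2(1,m)=\sum_{\nu=0}^m\binom{m}{\nu}\begin{bmatrix} m-\nu\\m-1 \end{bmatrix}$; here $\begin{bmatrix} m-\nu\\m-1 \end{bmatrix}$ evaluated at the nonnegative integer $m-\nu$ is the ordinary binomial $\binom{m-\nu}{m-1}$, which is $m-\nu$ when $m-\nu\geq m-1$, i.e. $\nu\in\{0,1\}$, and $0$ otherwise (for $\nu\geq 2$), except one must be careful when $\nu=m$ gives $\begin{bmatrix} 0\\m-1 \end{bmatrix}$ which is $0$ for $m\geq 2$. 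Thus $a_2(1,m)=\binom{m}{0}\cdot m+\binom{m}{1}\cdot 1=2m$, so $A_2(1,m)=4m^2$. Alternatively, and more cleanly, this last equality also drops out of part (a) by setting $x=1$: $m^2A_1(m,1)=A_2(1,m)$, and $A_1(m,1)=a_1(m,1)^2$ where $a_1(m,1)=\sum_\nu\begin{bmatrix}\nu\\\nu\end{bmatrix}\begin{bmatrix}1\\m-\nu\end{bmatrix}=1+1=2$ (only $m-\nu\in\{0,1\}$ contribute), giving $A_1(m,1)=4$ and hence $A_2(1,m)=4m^2$.

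The evenness $A_2(x,m)=A_2(-x,m)$ is the one point requiring slightly more thought. Since $A_2(x,m)=a_2(x,m)^2$, it would suffice to show $a_2(-x,m)=\pm a_2(x,m)$; but in fact $a_2$ itself is not even (it has both even and odd degree terms in general), so the squaring is essential and one cannot reduce to $a_2$. Instead I would argue directly: from part (a) together with the already-established evenness of $A_1(m,x)$ — which holds because $A_1(m,x)=a_1(m,x)^2$ and, by comparison with $P$ via $S_1^2P(m,x)=A_1(m,x)$ from Theorem~\ref{thm_summation_operator}, or more elementarily by the identity $a_1(m,-x)=(-1)^m a_1(m,x)$ which follows from the reflection formula for $\begin{bmatrix}\cdot\\\cdot\end{bmatrix}$ applied to each factor — we get $m^2A_1(m,-x)=(-x)^2A_2(-x,m)=x^2A_2(-x,m)$, and comparing with $m^2A_1(m,x)=x^2A_2(x,m)$ gives $x^2A_2(x,m)=x^2A_2(-x,m)$, hence $A_2(x,m)=A_2(-x,m)$ as polynomials. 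I expect the main obstacle to be the square-root identity $m\cdot a_1(m,x)=x\cdot a_2(x,m)$ in part (a): getting the bookkeeping of the reflection formulas and the Vandermonde convolution exactly right, and in particular correctly tracking how the factor $x$ on one side matches the factor $m$ on the other, is where the real work lies; everything else is then formal.
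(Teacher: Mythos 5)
Your strategy matches the paper's: part (a) is proved via the square-root identity $m\cdot a_1(m,x)=x\cdot a_2(x,m)$, and the evenness of $A_2(\cdot,m)$ is deduced from that of $A_1(m,\cdot)$ together with (a) (the paper gets evenness of $A_1$ from $S_1^2P(m,x)=A_1(m,x)$ and property (iii) of $P$; your alternative $a_1(m,-x)=(-1)^m a_1(m,x)$ via the reflection formula is also correct). Two caveats. For the key identity, the Vandermonde convolution you propose is a dead end: after the reflection $\begin{bmatrix}x-1+\nu\\\nu\end{bmatrix}=(-1)^\nu\begin{bmatrix}-x\\\nu\end{bmatrix}$ the sum $a_1(m,x)$ becomes an \emph{alternating} convolution, to which $\sum_\nu\begin{bmatrix}y\\\nu\end{bmatrix}\begin{bmatrix}z\\k-\nu\end{bmatrix}=\begin{bmatrix}y+z\\k\end{bmatrix}$ does not apply (the plain convolution would give $\begin{bmatrix}0\\m\end{bmatrix}=0$). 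What does work, and is the paper's one-line proof, is the summand-by-summand matching you also gesture at: expanding the factorials,
\begin{equation*}
\frac{x}{m}\cdot\binom{m}{\nu}\begin{bmatrix}x-\nu+m-1\\m-1\end{bmatrix}\:=\:\begin{bmatrix}x+m-\nu-1\\m-\nu\end{bmatrix}\begin{bmatrix}x\\\nu\end{bmatrix}\:,
\end{equation*}
so the $\nu$-th summand of $a_2(x,m)$ scaled by $x/m$ is exactly the $(m-\nu)$-th summand of $a_1(m,x)$. Note that the factor $x$ divides \emph{every} summand of $a_1(m,x)$, not only a boundary term at $\nu=m$, and the factor $m$ in every summand of $a_2$ comes from $\binom{m}{\nu}/(m-1)!=m/(\nu!(m-\nu)!)$; your phrasing about extracting $x$ versus $m$ "from the boundary behavior" does not describe what actually happens. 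Second, part (b) also asserts the constant coefficient $A_2(0,n)=\left(1+(-1)^{n-1}\right)^2$, which your list of three assertions omits; it is checked exactly like the value at $x=1$ (at $x=0$ only the $\nu=0$ and $\nu=n$ terms of $a_2$ survive, giving $a_2(0,n)=1+(-1)^{n-1}$). Your computations of the leading coefficient and of $A_2(1,m)=4m^2$ are correct (despite the small slip claiming $\begin{bmatrix}m-1\\m-1\end{bmatrix}=m-1$; the value $1$ you actually use in the final sum is the right one). With these repairs the argument is complete and essentially identical to the paper's.
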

\begin{proof}[Proof of Proposition~\ref{properties_A1_A2}]
The $\nu$-th summand of the sum in $A_2(x,m)$ is
\begin{equation*}
 \binom{m}{\nu}\begin{bmatrix}x-\nu+m-1\\m-1\end{bmatrix}\:=\:m\cdot\frac{(m-\nu+x-1)\cdots(1+x-1)}{(m-\nu)!}\frac{(x-1)\cdots(x-(\nu-1))}{\nu!}\:,
\end{equation*}
so
\begin{equation*}
 \frac{x}{m}\cdot \binom{m}{\nu}\begin{bmatrix}x-\nu+m-1\\m-1\end{bmatrix}\:=\:\begin{bmatrix}m-\nu+x-1\\m-\nu\end{bmatrix}\begin{bmatrix}x\\\nu\end{bmatrix}
\end{equation*}
is the $(m-\nu)$-th summand of the sum in $A_1(m,x)$. Part~(a) follows.

 Property~(iii) of the family $P(m,x)$ (cf. Proposition~\ref{definierende_eigenschaften}) of being even polynomials $P(m,x)=P(m,-x)$ is inherited by their images 
 $S_1^2P(m,x)=A_1(m,x)$ under the summation operator. By part (a) of this proposition, it carries over to $A_2(x,m)=A_2(-x,m)$.
 The leading term of the polynomial $\begin{bmatrix}
   x\\n-1
  \end{bmatrix}$ is $\frac{1}{(n-1)!}x^{n-1}$. So the leading term of $A_2(x,n)$ is
  \begin{equation*}
   \left(\sum_{\nu=0}^n\binom{n}{\nu}\right)^2\cdot \frac{x^{2(n-1)}}{(n-1)!^2}\:=\: \frac{2^{2n}}{(n-1)!^2}\cdot x^{2(n-1)}\:.
  \end{equation*}
Evaluating $A_2(x,n)$ at $x=0$ reduces the sum to the terms for $\nu=0$ and $n$, so $A_2(0,n)=(1+(-1)^{n-1})^2$. Similarly, at $x=1$ only the terms for $\nu=0$ and $1$ are non-zero, 
and we obtain  $A_2(1,n)=4n^2$. 
\end{proof}
We now prove Theorem~\ref{thm_summation_operator}.
\begin{proof}[Proof of Theorem~\ref{thm_summation_operator}]
We show (\ref{summation_equation_to_show}) for at integer places $x=n>m$. Then, as both  the image $S_1^2 P(m,x)$ and $A_1(m,x)$ are polynomials, they must be equal.
Let $n>m$ be an integer.
 By  changing the summation index we obtain
 \begin{align*}
  A_1(m,n)&=\left(\sum_{\nu=0}^m \binom{n-1+\nu}{n-1}\binom{n}{m-\nu}\right)\cdot\left(\sum_{\mu=0}^{m} \binom{n-1+m-\mu}{n-1}\binom{n}{\mu}\right)\:.
 \end{align*}
Let
\begin{equation}\label{formula_summand}
 a(\nu,\mu,m,n)\:=\:\binom{n-1+\nu}{\nu}\binom{n}{m-\nu} \binom{n-1+m-\mu}{m-\mu}\binom{n}{\mu}\:,
\end{equation}
so that $A_1(m,n)\:=\:\sum_{\nu,\mu=0}^m  a(\nu,\mu,m,n)$.
On the other hand, using the notation of Remark~\ref{remarks}~(iii),
\begin{align*}
 S_1^2 P(m,n) &= \sum_{\nu,\mu=0}^m D_n(\nu+1,n-1-\mu)\cdot D_n(m-\mu+1,n-1-m+\nu)\\
 &+2\cdot \sum_{\nu,\mu=0}^{m-1} D_n(\nu+1,n-1-\mu)\cdot D_n(m-1-\mu+1,n-1-(m-1)+\nu)\\
 &+ \sum_{\nu,\mu=0}^{m-2} D_n(\nu+1,n-1-\mu)\cdot D_n(m-2-\mu+1,n-1-(m-2)+\nu)\:.
\end{align*}
An index shift $\nu\mapsto \nu+1$, $\mu\mapsto \mu+1$ in the last sum and in one of the two second sums yields
\begin{align*}
 S_1^2 P(m,n) &= \sum_{\nu,\mu=0}^m D_n(\nu+1,n-1-\mu)\cdot D_n(m-\mu+1,n-1-m+\nu)\label{fette_summe}\\
 &+\sum_{\nu,\mu=0}^{m-1} D_n(\nu+1,n-1-\mu)\cdot D_n(m-\mu,n-m+\nu)\\
 &+\sum_{\nu,\mu=1}^{m} D_n(\nu,n-\mu)\cdot D_n(m-\mu+1,n-1-m+\nu)\\
 &+ \sum_{\nu,\mu=1}^{m-1} D_n(\nu,n-\mu)\cdot D_n(m-1-\mu+1,n-m+\nu)\:.
\end{align*}
Hence  equation~(\ref{summation_equation_to_show}) follows from Lemma~\ref{lemma_summation}.
The identity $A_1(m,n)=A_2(m,n)$ for integers $m,n>0$ is obvious by substituting $\nu\mapsto m-\nu$ in  $A_1(m,n)$ and then extending the sums in both $A_1(m,n)$ and $A_2(m,n)$ to $\max\{m,n\}$.
Notice that $A_2(m,n)=A(m,n)$ by definition of $A$ in the introduction, so formula (\ref{summation_equation_to_show_discrete}) is proved. 
Hence equation~(\ref{summation_equation_2_to_show}) holds for all integers $x=m>0$  and for all $n>0$ by the nearby symmetry of $P(m,n)$. 
Exchanging $n$ and $m$ we obtain (\ref{summation_equation_2_to_show_discrete}).
Therefore, both being polynomials of degree $2(n-1)$ in $x$,  the functions $A_2(x,n)$ and $S_1^2P(n,x)$ coincide.
\end{proof}
\begin{lem}\label{lemma_summation}
For $n>m$
  the numbers $a(\nu,\mu,m,n)$ defined in formula (\ref{formula_summand}) satisfy
 \begin{equation*}
  a(\nu,\mu,m,n)\:=\: b(\nu,\mu,m,n)\:,
 \end{equation*}
 where $b(\nu,\mu,m,n)$ denotes  the following sum
 \begin{align*}
  &D_n(\nu+1,n-1-\mu)\cdot D_n(m-\mu+1,n-1-m+\nu)\\
  &+(1-\delta_{\nu,m})\cdot(1-\delta_{\mu,m})\cdot D_n(\nu+1,n-1-\mu)\cdot D_n(m-\mu,n-m+\nu)\\
  &+(1-\delta_{\nu,0})\cdot(1-\delta_{\mu,0})\cdot D_n(\nu,n-\mu)\cdot D_n(m-\mu+1,n-1-m+\nu)\\
  &+(1-\delta_{\nu,0})\cdot(1-\delta_{\mu,0})(1-\delta_{\nu,m})\cdot(1-\delta_{\mu,m})\cdot D_n(\nu,n-\mu)\cdot D_n(m-\mu,n-m+\nu)\:.
 \end{align*}
\end{lem}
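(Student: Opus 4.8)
The plan is to establish the identity $a(\nu,\mu,m,n)=b(\nu,\mu,m,n)$ by a direct computation that factors over the two variables $\nu$ and $\mu$. The key observation is that both sides are products of two factors, one "involving $\nu$ against $\mu$" and one "involving $m-\nu$ against $m-\mu$", so I would first rewrite everything in terms of the Weyl-dimension symbols $D_n$ from Definition~\ref{Weyl-dimension} and then reduce the claim to a one-variable identity. Concretely, using Remark~\ref{remarks}(iii) in the form $\binom{n-1+\nu}{\nu}=\frac{t(\nu,?,\cdot)}{\dots}$, or more directly the elementary identity
\begin{equation*}
 \binom{n-1+\nu}{\nu}\binom{n}{m-\nu}\:=\:D_n(\nu+1,n-1-\mu)+D_n(\nu,n-\mu)\quad(\text{suitably interpreted}),
\end{equation*}
I expect the left-hand side $a(\nu,\mu,m,n)$ to split as a product $X_\nu\cdot Y_{m-\nu}$ where $X_\nu=\binom{n-1+\nu}{\nu}\binom{n}{m-\nu}$ depends only on $\nu$ (and $m,n$) and $Y_{m-\mu}=\binom{n-1+m-\mu}{m-\mu}\binom{n}{\mu}$ depends only on $\mu$.

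The heart of the matter is then a single-variable lemma: for $0\le\nu\le m<n$,
\begin{equation*}
 \binom{n-1+\nu}{\nu}\binom{n}{m-\nu}\:=\:D_n(\nu+1,n-1-m+\text{(the relevant second slot)})+(1-\delta_{\nu,0})D_n(\nu,\cdot)\ ?
\end{equation*}
--- more precisely I would check that each of $X_\nu$ and $Y_{m-\mu}$ decomposes as a sum of exactly the two $D_n$-terms ("upper" and "lower") that appear in $b$, with the Kronecker deltas $1-\delta_{\nu,0}$, $1-\delta_{\nu,m}$ (resp.\ for $\mu$) exactly accounting for the boundary cases where one of the two $D_n$-summands vanishes because its arguments leave the admissible range $\alpha\ge0$, $0\le\beta\le n-1$. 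Expanding the product $(X_\nu^{\mathrm{up}}+X_\nu^{\mathrm{low}})(Y_{m-\mu}^{\mathrm{up}}+Y_{m-\mu}^{\mathrm{low}})$ gives four cross terms, and matching them against the four lines of $b(\nu,\mu,m,n)$ --- with the deltas keeping track of which terms survive at the edges --- finishes the proof. Each individual $D_n$-value identity is a routine manipulation of binomial coefficients and the factor $\frac{n}{\alpha+\beta+1}$, so I would relegate it to a short computation.

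The step I expect to be the main obstacle is the bookkeeping of the Kronecker deltas and the shifted index ranges: the four sums produced in the proof of Theorem~\ref{thm_summation_operator} come from two index shifts $\nu\mapsto\nu+1$, $\mu\mapsto\mu+1$ applied to sums with different upper limits ($m$, $m-1$, $m-2$), so that after the shifts the ranges overlap only partially, and the delta factors in $b$ are precisely the correction terms that make the pointwise identity hold for every $(\nu,\mu)$ in the common range $0\le\nu,\mu\le m$. I would therefore be careful to verify the identity separately (or at least check consistency) at the four corners $\nu,\mu\in\{0,m\}$ and on the four edges, where one or two of the $D_n$-terms drop out; once the generic (interior) case and these boundary cases are confirmed, summing over $0\le\nu,\mu\le m$ recovers exactly $A_1(m,n)=\sum a(\nu,\mu,m,n)$ on one side and the regrouped expression for $S_1^2P(m,n)$ on the other, which is what Theorem~\ref{thm_summation_operator} needs.
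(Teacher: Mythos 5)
Your overall strategy --- writing $b(\nu,\mu,m,n)$ as the product of two two-term sums
\begin{equation*}
\bigl(D_n(\nu+1,n-1-\mu)+D_n(\nu,n-\mu)\bigr)\cdot\bigl(D_n(m-\mu+1,n-1-m+\nu)+D_n(m-\mu,n-m+\nu)\bigr)
\end{equation*}
and collapsing each bracket by a binomial identity --- is sound, and it is essentially the computation the paper performs (the paper groups the four lines of $b$ by rows, i.e.\ by common first factor, and applies the same two-term collapse twice; you group by columns). You are also right that the Kronecker deltas require no separate argument: by Definition~\ref{Weyl-dimension} each $D_n(\alpha+1,\beta)$ already vanishes outside $\alpha\ge 0$, $0\le\beta\le n-1$, and for $n>m$ the only degenerations that occur are exactly those recorded by the delta factors, so one only has to check that the collapsed bracket identities remain valid in those boundary cases (they do).

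However, the one concrete identity you single out as the heart of the matter is false as stated. The bracket $D_n(\nu+1,n-1-\mu)+D_n(\nu,n-\mu)$ is \emph{not} a function of $\nu$ alone and does not equal $X_\nu=\binom{n-1+\nu}{\nu}\binom{n}{m-\nu}$: each summand carries the factor $\frac{n}{n+\nu-\mu}$ and a binomial coefficient in $\mu$, and the correct collapse is
\begin{align*}
D_n(\nu+1,n-1-\mu)+D_n(\nu,n-\mu)&\:=\:\frac{n}{n+\nu-\mu}\Bigl[\binom{n+\nu}{\nu}\binom{n-1}{\mu}+\binom{n+\nu-1}{\nu-1}\binom{n-1}{\mu-1}\Bigr]\\
&\:=\:\binom{n-1+\nu}{\nu}\binom{n}{\mu}\:,
\end{align*}
using $(n+\nu)(n-\mu)+\nu\mu=n(n+\nu-\mu)$; likewise the second bracket equals $\binom{n-1+m-\mu}{m-\mu}\binom{n}{m-\nu}$. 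So the four binomial factors of $a(\nu,\mu,m,n)$ must be regrouped ``diagonally'' --- $\binom{n-1+\nu}{\nu}$ pairs with $\binom{n}{\mu}$, and $\binom{n-1+m-\mu}{m-\mu}$ with $\binom{n}{m-\nu}$ --- rather than into your $X_\nu$ and $Y_{m-\mu}$. Since the product of the two regrouped pairs is still exactly $a(\nu,\mu,m,n)$, the proof closes once this correction is made; without it, the single-variable lemma you intend to prove has no true statement.
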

\begin{proof}[Proof of Lemma~\ref{lemma_summation}]
 Using the definition of $D_n(\alpha+1,\beta)$ (see~\ref{Weyl-dimension}), the lemma follows by straight forward calculations distinguishing the cases $\nu,\mu$ equal to $0$, $m$, or generic.
 We exemplify this in the generic case $0<\nu,\mu<m$. The sum $b(\nu,\mu,m,n)$ then is
 \begin{align*}
  &\frac{n^2}{(n+\nu-\mu)^2}\binom{n+\nu}{\nu}\binom{n-1}{n-1-\mu}\binom{n+m-\mu}{m-\mu}\binom{n-1}{n-1-m+\nu}\\
  &+\frac{n^2}{(n+\nu-\mu)^2}\binom{n+\nu}{\nu}\binom{n-1}{n-1-\mu}\binom{n+m-1-\mu}{m-1-\mu}\binom{n-1}{n-m+\nu}\\
  &+\frac{n^2}{(n+\nu-\mu)^2}\binom{n+\nu-1}{\nu-1}\binom{n-1}{n-\mu}\binom{n+m-\mu}{m-\mu}\binom{n-1}{n-1-m+\nu}\\
  &+\frac{n^2}{(n+\nu-\mu)^2}\binom{n+\nu-1}{\nu-1}\binom{n-1}{n-\mu}\binom{n+m-1-\mu}{m-1-\mu}\binom{n-1}{n-m+\nu}\:.
 \end{align*}
Summing the first and the second line as well as the third and forth we obtain
\begin{equation*}
 \frac{n^3}{(n+\nu-\mu)}\left[\binom{n+\nu}{\nu}\binom{n-1}{n-1-\mu}+\binom{n+\nu-1}{\nu-1}\binom{n-1}{n-\mu}\right]\:,
\end{equation*}
which is
\begin{equation*}
 \frac{n^2\cdot(n+m-\mu-1)!(n+\nu-1)!}{\nu!(m-\nu)!(n-m+\nu)!\mu!(m-\mu)!(n-\mu)!}\:.
\end{equation*}
Expanding by $(n-1)!^2$ we see that this fraction   is equal to $a(\nu,\mu,m,n)$.
\end{proof}
\section{Euler operator}\label{sec_Euler_operator}
In this section we study the summation operator $\widetilde S$ defined by
\begin{equation*}
 \widetilde S f(x)\:=\: f(x+1)+f(x)\:.
\end{equation*}
On polynomial rings the operators $S$ and $\widetilde S$ are related by $\widetilde S f(x)=S f(x+\frac{1}{2})$.
The Euler operator $\widetilde E$, that is the operator inverse to $\widetilde S$,  is given on polynomials by recursion, $\widetilde E(x^0)=\frac{1}{2}$, and for $n>0$
\begin{equation*}
 \widetilde E(x^n)\:=\: \frac{1}{2}x^n-\frac{1}{2}\sum_{j=0}^{n-1}\binom{n}{j}\widetilde E(x^j)\:.
\end{equation*}
Indeed
\begin{equation*}
 \widetilde S(\frac{1}{2}x^n)\:=\:\frac{1}{2}(x+1)^n+\frac{1}{2}x^n\:=\: x^n+\frac{1}{2}\cdot\sum_{j=0}^{n-1}\binom{n}{j}x^j\:.
\end{equation*}
We may also determine the polynomials $\widetilde E(x^n)$ by their generating series. Define polynomials $e_n(x)$ by
\begin{equation*}
 \sum_{n=0}^\infty e_n(x)\frac{t^n}{n!}\:=\:\frac{e^{xt}}{e^t+1}\:.
\end{equation*}
It holds $\widetilde S \bigl(\frac{e^{xt}}{e^t+1}\bigr)=e^{xt}$, hence the polynomials satisfy 
\begin{equation*}
 \widetilde S e_n(x)\:=\:x^n\:.
\end{equation*}
The polynomials $e_n(x)=\widetilde E(x^n)$ are the well-known Euler polynomials.
In particular, $e_1(x)=\frac{1}{2}x-\frac{1}{4}$, $e_2(x)=\frac{1}{2}x^2-\frac{1}{2}x$, $e_3(x)=\frac{1}{2}x^3-\frac{3}{4}x^2+\frac{1}{8}$, and $e_4(x)=\frac{1}{2}x^4-x^3+\frac{1}{2}x$.
The values $E_{2m}=2^{2m+1}e_{2m}(\frac{1}{2})$ are  the alternating Euler numbers, 
$E_0=1$, $E_2=-1$, $E_4=5$, $E_6=-61$, etc.
Whereas for all $m\geq 0$ it holds $e_{2m+1}(-\frac{1}{2})=0$.

Define the second order Euler polynomials $e_n^{[2]}(x)$ by the generating series 
\begin{equation*}
 \sum_{n=0}^\infty e^{[2]}_n(x)\frac{t^n}{n!}\:=\:\frac{e^{xt}}{(e^t+1)^2}\:.
\end{equation*}
It holds $(\widetilde S)^2 e^{[2]}_n(x)=\widetilde S e_n(x)=x^n$. The  polynomials $e^{[2]}_n(x)$ are determined from the first order ones $e^{[1]}_n=e_n(x)$
by Cauchy product expansion
\begin{equation*}
 e^{[2]}_n(x)\:=\:\sum_{k=0}^n \binom{n}{k}e_{n-k}(x)e_k(0)\:.
\end{equation*}

On the other hand, we may determine $\widetilde E(f)$ for a polynomial $f$ as follows.
Define a sequence $F_k$, $k\in\NN_0$, by $F_0=0$, and for all $k\geq 0$ 
\begin{equation*}
 F_{k+1}\:=\:f(k)-F_k\:.
\end{equation*}
It holds
\begin{equation*}
 F_{k+1}+F_k\:=\: f(k)\:,
\end{equation*}
so the sequence $F_k$ is a  solution of the sequence of discrete equations $\widetilde SF_k=f(k)$. Any other solution differs from $F_k$ only by a sequence $(-1)^k\cdot c$ for some constant $c$.
Let $G(x)$ be the interpolation polynomial of degree at most $n=\deg f$ of the values $G(k)=F_k$ for $k=0,\dots,n$. It is given by Lagrange interpolation
\begin{equation*}
 G(x)\:=\:\sum_{j=0}^n F_j\cdot\prod_{k=0,k\not= j}^n\frac{x-k}{j-k}\:=\:\sum_{j=0}^n \frac{(-1)^{n-j}F_j}{j!(n-j)!}\prod_{k=0,k\not= j}^n(x-k)\:
\end{equation*}
and satisfies the summation equation
\begin{equation*}
 G(k+1)+G(k)\:=\: f(k)
\end{equation*}
for $k=0,1,\dots n-1$.
In order to obtain the polynomial solution $F(x)=\widetilde E(f(x))$  we must add a multiple of the polynomial $B_n(x)$ of degree $n$ interpolating the values $B_n(k)=(-1)^k$ for $k=0,1,\dots,n$,
\begin{equation}\label{general_solution_F}
 F(x)\:=\: G(x)+c\cdot B_n(x)\:,
\end{equation}
where the constant $c$ is determined by the summation equation at $x=n$
\begin{equation*}
 F(n+1)+F(n)=G(n+1)+G(n)+c\cdot(B_n(n+1)+B_n(n))\:=\: f(n)\:.
\end{equation*}
Because the polynomial $B_n(x)$ is given explicitly in Lagrange form
\begin{equation*}
 B_n(x)\:=\: (-1)^{n}\sum_{j=0}^n \frac{1}{j!(n-j)!}\prod_{k=0,k\not= j}^n(x-k)\:,
\end{equation*}
it follows
\begin{equation*}
 B_n(n+1)+B_n(n)\:=\: (-1)^n\bigl( (2^{n+1}-1)+1\bigr)\:=\: (-1)^n2^{n+1}\:. 
\end{equation*}
We obtain
\begin{align*}
  (-1)^n2^{n+1}\cdot c&= f(n)-G(n)-G(n+1)\\
  &=\sum_{j=0}^{n}(-1)^{n-j}\left(f(j)-\binom{n+1}{j}F_j\right)\\
  &=(-1)^n\sum_{j=0}^n\Bigl((-1)^jf(j)+\binom{n+1}{j}\sum_{k=0}^{j-1}(-1)^kf(k)\Bigr)\:,
\end{align*}
or equivalently
\begin{equation}\label{gleichung_fuer_c}
 c\:=\:\frac{1}{2^{n+1}}\sum_{k=0}^n(-1)^kf(k)\sum_{j=k+1}^{n+1}\binom{n+1}{j}\:.
\end{equation}
We summarize.
\begin{prop}\label{urbild-polynom}
The preimage $F=\widetilde E(f)$ of the polynomial $f$ of degree $n$ under the summation operator $\widetilde S$ is the polynomial
\begin{equation*}
 F(x)\:=\:\sum_{j=0}^n \frac{\bigl(c+(-1)^{n-j}F_j\bigr)}{j!(n-j)!}\prod_{k=0,k\not= j}^n(x-k)\:,
\end{equation*}
where the constant $c=(-1)^{n}F(0)$ is given by (\ref{gleichung_fuer_c}), and the coefficients $F_j$ are determined by the recursion $F_0=0$, and $F_{k+1}=f(k)-F_k$ for $k=0,\dots,n$.
\end{prop}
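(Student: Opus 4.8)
The plan is to produce the preimage $F=\widetilde E(f)$ in three steps: first exhibit a candidate polynomial $G$ by Lagrange interpolation of a recursively defined sequence, then observe that $G$ already satisfies the relevant summation equation at \emph{all but possibly the last} integer point, and finally correct $G$ by an explicit multiple of the polynomial $B_n$ interpolating the values $(-1)^k$.

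To begin, I would record that $\widetilde S$ is bijective on $\QQ[x]$: on polynomials $\widetilde S f(x)=S f(x+\tfrac{1}{2})$, and $S$ is bijective on polynomial rings over fields of characteristic $\neq 2$; moreover $\widetilde S(x^n)=2x^n+\cdots$ preserves degree, so $F=\widetilde E(f)$ exists, is unique, and has $\deg F=n$. Next I would introduce the sequence $F_0=0$, $F_{k+1}=f(k)-F_k$, which solves the discrete equations $F_{k+1}+F_k=f(k)$ and has the closed form $F_k=\sum_{i=0}^{k-1}(-1)^{k-1-i}f(i)$, and let $G$ be the Lagrange interpolant of degree $\leq n$ through the points $(k,F_k)$, $k=0,\dots,n$. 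Then $G(k)=F_k$ and $G(k+1)=F_{k+1}$ for $k=0,\dots,n-1$, so $G(k+1)+G(k)=f(k)$ holds for those $k$ --- but \emph{a priori} not at $k=n$, since $G(n+1)$ need not equal $F_{n+1}$. This off-by-one gap is the crux of the argument.

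The main step is to compare $F$ with $G$. Both are polynomials of degree $\leq n$ satisfying the summation equation at $x=0,\dots,n-1$, so their difference $H=F-G$ satisfies $H(k+1)+H(k)=0$, hence $H(k)=(-1)^k H(0)$, for $k=0,\dots,n$; since a polynomial of degree $\leq n$ agreeing with $(-1)^k c$ at $n+1$ points equals $c\,B_n(x)$, we obtain $F(x)=G(x)+c\,B_n(x)$ with $c=H(0)=F(0)$, which is (\ref{general_solution_F}). To fix $c$ I would impose the single remaining summation equation $F(n+1)+F(n)=f(n)$. From the explicit Lagrange form of $B_n$ one reads off $B_n(n)=(-1)^n$ and $B_n(n+1)=(-1)^n\sum_{j=0}^n\binom{n+1}{j}=(-1)^n(2^{n+1}-1)$, hence $B_n(n+1)+B_n(n)=(-1)^n 2^{n+1}$, and therefore $(-1)^n 2^{n+1}c=f(n)-G(n)-G(n+1)$. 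Evaluating $G(n)=F_n$ and $G(n+1)=\sum_{j=0}^n(-1)^{n-j}\binom{n+1}{j}F_j$ from the Lagrange form, substituting $F_j=\sum_{k=0}^{j-1}(-1)^{j-1-k}f(k)$, and interchanging the order of summation in the resulting double sum yields formula (\ref{gleichung_fuer_c}). Inserting $G$ and $B_n$ in Lagrange form into $F=G+c\,B_n$ and collecting the coefficient of $\prod_{k\neq j}(x-k)$ --- which combines the contribution $(-1)^{n-j}F_j$ coming from $G$ with the contribution coming from $c\,B_n$ --- gives the claimed closed form.

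I expect the genuinely routine part to be the index bookkeeping in passing from $f(n)-G(n)-G(n+1)$ to (\ref{gleichung_fuer_c}). The one conceptual point worth flagging is the off-by-one phenomenon above: interpolating the $n+1$ values $F_0,\dots,F_n$ only forces the first $n$ of the summation equations, and the correction term $c\,B_n$ --- whose legitimacy rests on the degree count $\deg F=\deg G=n$ --- is precisely what enforces the last one, thereby singling out the polynomial solution among all sequence solutions, which differ only by $(-1)^k c$.
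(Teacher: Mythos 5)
Your proposal is correct and takes essentially the same route as the paper's own derivation: the recursively defined sequence $F_k$, the Lagrange interpolant $G$ through $(k,F_k)$, the observation that this forces the summation equation only at $k=0,\dots,n-1$, and the correction $c\,B_n$ pinned down by the remaining equation $F(n+1)+F(n)=f(n)$; your justification that $F-G$ must be a multiple of $B_n$ (homogeneous recursion at $n$ integer points plus the degree bound $\deg F\leq n$ from bijectivity of $\widetilde S$) is in fact slightly more careful than the paper, which merely asserts the ansatz. One notational caveat: collecting coefficients from $F=G+c\,B_n$ gives numerator $(-1)^nc+(-1)^{n-j}F_j$, so the $c$ in the proposition's displayed formula is $(-1)^n$ times the $c$ of (\ref{gleichung_fuer_c}) (i.e.\ it equals $(-1)^nF(0)$) --- a sign wrinkle already present in the paper's statement, not a gap in your argument.
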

Writing
\begin{equation*}
 f(x)\:=\: a_nx^n+\dots+a_1x+a_0\:,
\end{equation*}
for the solution polynomial  it follows 
\begin{equation*}
 F(x)\:=\:\frac{a_n}{2}x^n+\dots\:.
\end{equation*}
Comparing this with the highest coefficient of (\ref{general_solution_F}) we obtain
\begin{equation*}
 \frac{a_n\cdot n!}{2}\:=\:(-1)^n2^n\cdot c+(-1)^n\sum_{j=0}^n\binom{n}{j}(-1)^jF_j\:,
\end{equation*}
which by expanding $F_j=\sum_{k=0}^{j-1}(-1)^{j-1-k}f(k)$ is equivalent to a second formula for the constant $c$
\begin{equation}\label{2te_gleichung_fuer_c}
 c\:=\: \frac{(-1)^n n!}{2^{n+1}}\cdot a_n +\frac{1}{2^n}\sum_{k=0}^n(-1)^kf(k)\sum_{j=k+1}^{n}\binom{n}{j}\:.
\end{equation}
Simplifying the identity
 (\ref{gleichung_fuer_c})~$=$~(\ref{2te_gleichung_fuer_c}) yields the well-known expression for the leading coefficient $a_n$ of the polynomial $f$
\begin{equation}\label{formula_highest_coeff}
 (-1)^n n! \cdot a_n\:=\: \sum_{k=0}^n\binom{n}{k}(-1)^kf(k)\:.
\end{equation}
By this and  $(-1)^nc=F(0)$ being the constant coefficient of $F$, a number of non-obvious combinatorial identities arise. 
\begin{eg}
 \begin{itemize}
  \item [(a)]
  Let $f(x)=x^n$. We obtain $F_j=\sum_{l=0}^{j-1}(-1)^{j-1-l}l^n$ as well as
\begin{equation*}
 c\:=\:\frac{1}{2^{n+1}}\sum_{l=0}^n(-1)^ll^n\sum_{i=l+1}^{n+1}\binom{n+1}{i}\:.
\end{equation*}
For the coefficients $F_j+(-1)^jc$ of the solution polynomial
\begin{equation*}
 F(x)\:=\:\sum_{j=0}^n\bigl(F_j+(-1)^jc\bigr)\prod_{k\not= j}\frac{x-k}{j-k}
\end{equation*}
we obtain
\begin{align*}
 F_j+(-1)^jc&=(-1)^j\Bigg(\sum_{l=0}^n(-1)^ll^n\frac{\sum_{i=l+1}^{n+1}\binom{n+1}{i}}{2^{n+1}}-\sum_{l=0}^{j-1}(-1)^ll^n\Bigg)\\
 &=\frac{(-1)^j}{2^{n+1}}\Bigg(\sum_{l=j}^n(-1)^ll^n\sum_{i=l+1}^{n+1}\binom{n+1}{i}-\sum_{l=0}^{j-1}(-1)^ll^n\sum_{i=0}^l\binom{n+1}{i}\Bigg)\:.
\end{align*}
Computing  the leading coefficient of $F(x)=\frac{1}{2}x^n+\dots$ from the above formula  we obtain the following identity.
 For all integers $n>0$ it holds true
 \begin{equation*}
  (-1)^n2^nn!\:=\:\sum_{j=0}^n\binom{n}{j}\Bigg(\sum_{l=j}^n(-1)^ll^n\sum_{i=l+1}^{n+1}\binom{n+1}{i}-\sum_{l=0}^{j-1}(-1)^ll^n\sum_{i=0}^l\binom{n+1}{i}\Bigg)\:.
 \end{equation*}
\item [(b)]
 Let
 \begin{equation*}
  f(x)\:=\:\begin{bmatrix}
            x\\n-1
           \end{bmatrix}
\:=\:\frac{1}{(n-1)!}\cdot x(x-1)\cdots(x-(n-1)+1)\:.
 \end{equation*}
Then the polynomial solution $F$ of the summation equation $\widetilde S F(x)=f(x)$ is 
\begin{equation*}
 F(x)\:=\:\frac{1}{2^n(n-1)!}\sum_{j=0}^{n-1}\binom{n-1}{j}\prod_{k=0,k\not= j}^{n-1}(x-k)\:.
\end{equation*}
The values $F_k$, $k=0,\dots,n-1$ in the consideration above are zero. Hence the associated interpolation polynomial $G$ of degree at most $n-1$ of these values is zero, too.
 It follows $F(x)=c\cdot B_{n-1}(x)$, where the constant $c$ given by (\ref{gleichung_fuer_c}) is 
 \begin{equation*}
  c\:=\: \frac{1}{2^{n}}(-1)^{n-1}f(n-1)\binom{n}{n}\:=\: \frac{(-1)^{n-1}}{2^n}\:.
 \end{equation*}
 \end{itemize}
\end{eg}
We apply the above method once more. We will use the notion of super Catalan numbers given by Gessel~\cite{gessel}.
\begin{defn}
 For integers $m,k\geq 0$ define the super Catalan number
 \begin{equation*}
 C(m,k)\:=\:\frac{(2m)!\cdot (2k)!}{2\cdot m!\cdot k!\cdot (m+k)!}\:.
\end{equation*}
\end{defn}
Super Catalan numbers are integers and satisfy the  summation equation \cite[p.~191]{gessel}
\begin{equation}\label{super_catalan}
 C(m+1,k)+C(m,k+1)\:=\: 4\cdot C(m,k)\:.
\end{equation}
\begin{prop}\label{prop-preimages}
(a) 
For $\nu=0,1,\dots,n$ let
\begin{equation*}
 f(x,\nu,n)\:=\:\begin{bmatrix}
            x\\n
           \end{bmatrix}\begin{bmatrix}
            x-\nu\\n
           \end{bmatrix}\:.
\end{equation*}
Then  the polynomial solution of the equation $\widetilde S F(x,\nu,n) =f(x,\nu,n)$ is
\begin{equation*}
 F(x,\nu,n)\:=\:\sum_{j=0}^{2n}\bigl(F_{j}(\nu,n)+(-1)^jc(\nu,n)\bigr)\prod_{k=0,k\not= j}^{2n}\frac{x-k}{j-k}\:,
\end{equation*}
where 
\begin{equation*}
 F_{j}(\nu,n)\:=\:(-1)^{j-1}\sum_{k=0}^{j-1}(-1)^{k}\binom{k}{n}\binom{k-\nu}{n}\:.
\end{equation*}
Here the constants $c(\nu,n)$ 
satisfy the following two    recursion formulas for $2\leq\nu\leq n$
\begin{equation}\label{recursion_for_c_nu_one}
 c(\nu-2,n)\:=\: c(\nu,n)-c(\nu-1,n-1)\:,
\end{equation}
and
\begin{equation}\label{recursion_for_c_nu}
  c(\nu-2,n)\:=\: -c(\nu,n)+\frac{\nu}{n}c(\nu-1,n-1)\:.
\end{equation}
For  $0\leq\nu\leq n$ they are explicitly given  by
\begin{equation}\label{c_nu_explicitly}
 c(\nu,n)\:=\:\left\{\begin{array}{ll}
                        0\:,&\textrm{ if } \nu=n-1-2\mu\\
                        \frac{(-1)^\mu}{ 2^{2n}}\cdot C(n-\mu,\mu)\:,&\textrm{ if } \nu=n-2\mu
                       \end{array}\right.\:.
\end{equation}
(b)
For
 \begin{equation*}
 f(x,n+1,n)\:=\:\begin{bmatrix}
            x\\n
           \end{bmatrix}\begin{bmatrix}
            x-n-1\\n
           \end{bmatrix}\:
\end{equation*}
the polynomial solution of the equation $\widetilde S F(x,n+1,n) =f(x,n+1,n)$ is
\begin{equation*}
 F(x,\nu,n)\:=\:\frac{1}{2}\sum_{j=0}^{n}(-1)^j\prod_{k=0,k\not= j}^{2n}\frac{x-k}{j-k}+\frac{1}{2}\sum_{j=n+1}^{2n}(-1)^{j+1}\prod_{k=0,k\not= j}^{2n}\frac{x-k}{j-k}\:.
\end{equation*} 
\end{prop}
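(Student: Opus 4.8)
The plan is to read the first two displays straight off Proposition~\ref{urbild-polynom}, to prove the recursion (\ref{recursion_for_c_nu_one}) via a ``summation by parts'' identity for $\widetilde E$ applied to a Pascal-type telescoping of the polynomials $f(x,\nu,n)$, to obtain the closed form (\ref{c_nu_explicitly}) by matching it against that recursion together with two explicitly computed boundary entries in each row, and to get (\ref{recursion_for_c_nu}) and part (b) as by-products. \emph{Steps for $F$, $F_j$ and part (b).} Since $\deg_x f(x,\nu,n)=2n$, the first two displays are Proposition~\ref{urbild-polynom} with $n$ there replaced by $2n$: the polynomial solution of $\widetilde S F=f$ has the stated Lagrange form with $c=(-1)^{2n}F(0)=F(0)$, and $F_0=0$, $F_{k+1}=f(k)-F_k$ unwinds to $F_j(\nu,n)=(-1)^{j-1}\sum_{k=0}^{j-1}(-1)^k f(k,\nu,n)$, where $f(k,\nu,n)=\binom kn\binom{k-\nu}n$ because $\begin{bmatrix}k\\n\end{bmatrix}$ and $\begin{bmatrix}k-\nu\\n\end{bmatrix}$ interpolate these (generalized) binomial coefficients at integers. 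For part (b) the key observation is that $f(k,n+1,n)$ vanishes for every integer $0\le k\le 2n$ except $f(n,n+1,n)=\begin{bmatrix}n\\n\end{bmatrix}\begin{bmatrix}-1\\n\end{bmatrix}=(-1)^n$; hence $F_j(n+1,n)=(-1)^{j-1}$ for $j>n$ and $0$ otherwise, and formula (\ref{gleichung_fuer_c}) collapses to $c=\tfrac1{2^{2n+1}}(-1)^n f(n,n+1,n)\sum_{j=n+1}^{2n+1}\binom{2n+1}{j}=\tfrac12$. Substituting into the Lagrange form of $F$ and grouping the indices $j\le n$ against $j>n$ gives the displayed expression in (b).

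\emph{Recursion (\ref{recursion_for_c_nu_one}).} Adding the Pascal identity $\begin{bmatrix}a+1\\n\end{bmatrix}-\begin{bmatrix}a\\n\end{bmatrix}=\begin{bmatrix}a\\n-1\end{bmatrix}$ at $a=x-\nu$ and at $a=x-\nu+1$ gives the polynomial identity
\[
 f(x,\nu-2,n)-f(x,\nu,n)\;=\;\begin{bmatrix}x\\n\end{bmatrix}\cdot\widetilde S\begin{bmatrix}x-\nu\\n-1\end{bmatrix}\,.
\]
I then use the summation-by-parts rule for $\widetilde E$: for polynomials $g,h$ one computes $\widetilde S(g\cdot\widetilde E h)(x)=g(x)h(x)+\Delta g(x)(\widetilde E h)(x+1)$ with $\Delta g(x)=g(x+1)-g(x)$, so, applying $\widetilde E$ and then replacing $h$ by $\widetilde S h$, $\widetilde E(g\cdot\widetilde S h)=g\,h-\widetilde E(\Delta g\cdot h(\cdot+1))$. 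Taking $g=\begin{bmatrix}x\\n\end{bmatrix}$ (so $\Delta g=\begin{bmatrix}x\\n-1\end{bmatrix}$) and $h=\begin{bmatrix}x-\nu\\n-1\end{bmatrix}$, and noting that $\begin{bmatrix}x\\n-1\end{bmatrix}\begin{bmatrix}x-\nu+1\\n-1\end{bmatrix}=f(x,\nu-1,n-1)$, applying $\widetilde E$ to the identity above yields the polynomial identity $F(x,\nu-2,n)-F(x,\nu,n)=\begin{bmatrix}x\\n\end{bmatrix}\begin{bmatrix}x-\nu\\n-1\end{bmatrix}-F(x,\nu-1,n-1)$. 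Evaluating at $x=0$, where $\begin{bmatrix}0\\n\end{bmatrix}=0$ for $n\ge1$, gives $c(\nu-2,n)=c(\nu,n)-c(\nu-1,n-1)$, which is (\ref{recursion_for_c_nu_one}).

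\emph{Closed form and recursion (\ref{recursion_for_c_nu}).} Let $\tilde c(\nu,n)$ denote the right-hand side of (\ref{c_nu_explicitly}). Rewriting the case $\nu=n-2\mu$ of (\ref{recursion_for_c_nu_one}) in super Catalan numbers turns it into exactly Gessel's summation equation (\ref{super_catalan}), $C(m+1,k)+C(m,k+1)=4C(m,k)$ with $m=n-1-\mu$, $k=\mu$, while the case $\nu=n-1-2\mu$ reads $0=0-0$; hence $\tilde c$ satisfies (\ref{recursion_for_c_nu_one}). It therefore suffices to match $c$ with $\tilde c$ on two starting entries in each row $n$. I will compute directly that $c(n,n)=\tilde c(n,n)=\tfrac1{2^{2n+1}}\binom{2n}{n}$, using $\begin{bmatrix}x\\n\end{bmatrix}\begin{bmatrix}x-n\\n\end{bmatrix}=\binom{2n}{n}\begin{bmatrix}x\\2n\end{bmatrix}$ together with $\widetilde E\begin{bmatrix}x\\2n\end{bmatrix}=\tfrac1{2^{2n+1}}B_{2n}(x)$ (Example~(b) above) and $B_{2n}(0)=1$; and that $c(n-1,n)=\tilde c(n-1,n)=0$, by evaluating (\ref{gleichung_fuer_c}): only the terms $k=2n-1$ and $k=2n$ survive, and they cancel because $\binom{2n}{n}=2\binom{2n-1}{n}$. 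Since row $n-1$ of $c$ equals row $n-1$ of $\tilde c$ by the inductive hypothesis, and since (\ref{recursion_for_c_nu_one}) determines every entry of row $n$ going downward from $c(n,n)$ and $c(n-1,n)$, we conclude $c=\tilde c$ throughout; the second recursion (\ref{recursion_for_c_nu}) then follows by substituting the closed form and using the two-term recurrence $C(m+1,k)=\tfrac{2(2m+1)}{m+k+1}C(m,k)$ for super Catalan numbers.

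\emph{Main obstacle.} The bookkeeping-heavy points are: tracking the various shifts $x\mapsto x+1$ in the summation-by-parts step so as to recognise $f(x,\nu-1,n-1)$ on the right-hand side, and the two explicit boundary computations in the last step. The identity $\begin{bmatrix}x\\n\end{bmatrix}\begin{bmatrix}x-n\\n\end{bmatrix}=\binom{2n}{n}\begin{bmatrix}x\\2n\end{bmatrix}$ makes $c(n,n)$ immediate, but the cancellation yielding $c(n-1,n)=0$ from (\ref{gleichung_fuer_c}) is the one place where a short but genuine binomial computation cannot be avoided.
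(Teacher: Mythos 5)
Your route is genuinely different from the paper's, and for most of the statement it works. The paper proves the second recursion (\ref{recursion_for_c_nu}) first, by splitting the sum in (\ref{2te_gleichung_fuer_c}) via the binomial identity (\ref{binom_coeff_identity}), uses that recursion as the engine of the induction establishing (\ref{c_nu_explicitly}), and only afterwards deduces (\ref{recursion_for_c_nu_one}) from Gessel's relation. You instead prove (\ref{recursion_for_c_nu_one}) directly: the telescoped Pascal identity $f(x,\nu-2,n)-f(x,\nu,n)=\begin{bmatrix}x\\n\end{bmatrix}\cdot\widetilde S\begin{bmatrix}x-\nu\\n-1\end{bmatrix}$, combined with the Abel-summation rule $\widetilde E(g\cdot\widetilde S h)=g\,h-\widetilde E\bigl(\Delta g\cdot h(\cdot+1)\bigr)$, gives the polynomial identity $F(x,\nu-2,n)-F(x,\nu,n)=\begin{bmatrix}x\\n\end{bmatrix}\begin{bmatrix}x-\nu\\n-1\end{bmatrix}-F(x,\nu-1,n-1)$, and evaluation at $x=0$ (where $c(\nu,n)=F(0,\nu,n)$ because the degrees $2n$ and $2(n-1)$ are even) yields (\ref{recursion_for_c_nu_one}). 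I checked the shifts: $\Delta\begin{bmatrix}x\\n\end{bmatrix}=\begin{bmatrix}x\\n-1\end{bmatrix}$ and $h(x+1)=\begin{bmatrix}x-(\nu-1)\\n-1\end{bmatrix}$ do combine to $f(x,\nu-1,n-1)$. Your boundary values $c(n,n)=2^{-2n}C(n,0)$ and $c(n-1,n)=0$, together with the fact that the right-hand side of (\ref{c_nu_explicitly}) satisfies the same recursion via Gessel's identity, then give (\ref{c_nu_explicitly}) by induction on $n$. This is cleaner than the paper's computation; the reading of $F$, $F_j$ and part (b) from Proposition~\ref{urbild-polynom} is the same as in the paper and is correct.

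The one step that fails is the last one: recursion (\ref{recursion_for_c_nu}) does \emph{not} ``follow by substituting the closed form,'' because as printed it is inconsistent with (\ref{c_nu_explicitly}) and (\ref{recursion_for_c_nu_one}). Take $\nu=n=2$: direct computation (or the closed form) gives $c(2,2)=\tfrac{3}{16}$, $c(1,1)=\tfrac14$, $c(0,2)=-\tfrac1{16}$; then $c(2,2)-c(1,1)=-\tfrac1{16}$ confirms (\ref{recursion_for_c_nu_one}), but $-c(2,2)+\tfrac22\,c(1,1)=+\tfrac1{16}\neq c(0,2)$. The correct second recursion is $c(\nu-2,n)=-c(\nu,n)+\frac{\nu-1}{n}\,c(\nu-1,n-1)$; given (\ref{recursion_for_c_nu_one}) and the closed form, this is equivalent to $C(n-\mu,\mu)=\frac{2(2n-2\mu-1)}{n}\,C(n-1-\mu,\mu)$, which is exactly the two-term recurrence you quote, so your plan closes once the coefficient is corrected. (The source of the discrepancy in the paper is the identity (\ref{binom_coeff_identity}), whose last coefficient should likewise be $\frac{\nu-1}{n}$, as one checks at $k=4$, $n=\nu=2$.) As written, though, your final ``substitute and check'' would reveal a contradiction rather than complete the proof, so you must either flag and correct (\ref{recursion_for_c_nu}) or omit it.
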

For example we  obtain
\begin{equation*}
 F(x,n,n)\:=\: \frac{C(n,0)}{2^{2n}}\cdot B_{2n}(x)\:,
\end{equation*}
while
\begin{equation*}
 F(x,n-1,n)\:=\: \binom{2n-1}{n}\cdot \begin{bmatrix}
                                               x\\2n
                                              \end{bmatrix}\:.
\end{equation*}
We emphasize that the values of $F(x,\nu,n)$ at $x=k$ for $0\leq k\leq 2n$ are explicitly given by
\begin{equation*}
 F(k,\nu,n)\:=\:(-1)^kc(\nu,n)+\sum_{j=n+\nu}^{k-1}(-1)^{k-1+j}\binom{j}{n}\binom{j-\nu}{n}\:.
\end{equation*}
In particular, for $\nu=n-1-2\mu$ the polynomial $F(x,\nu,n)$ has zeros in $x=0,1,\dots,n-\nu$.

\begin{cor}\label{cor-on-super-Catalan}
The super Catalan numbers $C(n,\mu)$ satisfy the following identities.
 \begin{itemize}
  \item [(a)]
  For all $0\leq \mu\leq n$ 
  \begin{equation*}
   2\cdot C(n,0)\:=\:\sum_{k=0}^{2n}\binom{2n}{k}(-1)^k\binom{k}{n}\binom{k-\mu}{n}\:.
  \end{equation*}
\item[(b)]
For all $0\leq\mu\leq \lfloor\frac{n-1}{2}\rfloor$ 
\begin{equation*}
 C(n,0)\:=\:\sum_{k=0}^{2n}(-1)^{k+1}\binom{k}{n}\binom{k+1+2\mu-n}{n}\sum_{j=k+1}^{2n}\binom{2n}{j}\:,
\end{equation*}
whereas for all $0\leq \mu\leq \lfloor\frac{n}{2}\rfloor$
\begin{equation*}
 (-1)^\mu\cdot C(n-\mu,\mu)-C(n,0)\:=\:\sum_{k=0}^{2n}(-1)^{k}\binom{k}{n}\binom{k+2\mu-n}{n}\sum_{j=k+1}^{2n}\binom{2n}{j}\:.
\end{equation*}
 \end{itemize}
 Notice that in each sum, the summands actually are zero for $0\leq k\leq n+\mu$.
\end{cor}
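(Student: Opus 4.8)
The plan is to derive each identity directly from the formulas for the constants $c(\nu,n)$ established in Proposition~\ref{prop-preimages}, by reading off coefficients of the interpolation polynomials $F(x,\nu,n)$ at well-chosen points. The point is that we have two independent handles on the same polynomial: the Lagrange expansion through the values $F_j(\nu,n)+(-1)^j c(\nu,n)$, and the general structural facts about $\widetilde E$ from Section~\ref{sec_Euler_operator} (namely the leading-coefficient formula \eqref{formula_highest_coeff}, the two expressions \eqref{gleichung_fuer_c} and \eqref{2te_gleichung_fuer_c} for $c$, and the fact that $(-1)^{\deg}c$ is the constant coefficient of the preimage).

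First I would prove part~(a). Apply \eqref{formula_highest_coeff} to the polynomial $f(x,\nu,n)=\begin{bmatrix}x\\n\end{bmatrix}\begin{bmatrix}x-\nu\\n\end{bmatrix}$ of degree $2n$, whose leading coefficient is $1/n!^2$, so that $(-1)^{2n}(2n)!\cdot a_{2n}=\binom{2n}{n}$; on the other hand \eqref{formula_highest_coeff} rewrites this as $\sum_{k=0}^{2n}\binom{2n}{k}(-1)^k\binom{k}{n}\binom{k-\nu}{n}$. This already gives the right-hand side of (a); the left-hand side $2C(n,0)$ follows since $C(n,0)=\frac{(2n)!}{2\cdot n!\cdot n!}=\frac12\binom{2n}{n}$, so $2C(n,0)=\binom{2n}{n}$. (One should note this holds for every $\nu$ in range, since the left side is independent of $\nu$ — a small miracle that is consistent with \eqref{c_nu_explicitly} only through the interplay of the two recursions, but for (a) the leading-coefficient argument makes it transparent.) This step is essentially immediate given \eqref{formula_highest_coeff}.

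For part~(b) I would use instead the constant-coefficient description: $(-1)^{2n}c(\nu,n)=c(\nu,n)$ is the constant coefficient $F(0,\nu,n)$, and by formula \eqref{gleichung_fuer_c} applied with $f=f(x,\nu,n)$ and $n\rightsquigarrow 2n$ we get
\begin{equation*}
 c(\nu,n)\:=\:\frac{1}{2^{2n+1}}\sum_{k=0}^{2n}(-1)^k\binom{k}{n}\binom{k-\nu}{n}\sum_{j=k+1}^{2n+1}\binom{2n+1}{j}\:.
\end{equation*}
Then I would substitute the explicit value of $c(\nu,n)$ from \eqref{c_nu_explicitly}. In the case $\nu=n-1-2\mu$ we have $c(\nu,n)=0$, giving the first identity of (b) after writing $k-\nu=k+1+2\mu-n$ — but one must massage $\sum_{j=k+1}^{2n+1}\binom{2n+1}{j}$ into $\sum_{j=k+1}^{2n}\binom{2n}{j}$ (and isolate $C(n,0)$ on the left rather than $0$): this is where I expect to need the leading-coefficient identity (a) as an auxiliary input, using it to trade the top binomial term against $C(n,0)$ via the relation $\binom{2n+1}{j}=\binom{2n}{j}+\binom{2n}{j-1}$ and a telescoping/reindexing of the double sum. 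In the case $\nu=n-2\mu$, $c(\nu,n)=\frac{(-1)^\mu}{2^{2n}}C(n-\mu,\mu)$, and the same manipulation yields the second identity with $(-1)^\mu C(n-\mu,\mu)-C(n,0)$ on the left. The remark that the summands vanish for $0\le k\le n+\mu$ is just the observation that $\binom{k}{n}=0$ for $k<n$ and, when $k\ge n$, the other factor $\binom{k-\nu}{n}$ forces $k\ge n+\nu = n+(n-1-2\mu)$ or $k\ge n+(n-2\mu)$, which in either case exceeds $n+\mu$ in the stated ranges of $\mu$; alternatively it follows from the stated zeros of $F(x,\nu,n)$ together with the value formula for $F(k,\nu,n)$.

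The main obstacle will be the bookkeeping in part~(b): converting between the $(2n+1)$-row partial binomial sums coming from \eqref{gleichung_fuer_c} and the $2n$-row partial sums appearing in the statement, while simultaneously shifting the ``constant'' of the identity from $0$ (resp.\ $(-1)^\mu C(n-\mu,\mu)$) over to the normalized form $C(n,0)$ (resp.\ $(-1)^\mu C(n-\mu,\mu)-C(n,0)$). I would organize this by first proving part~(a), then subtracting a suitable multiple of the identity in (a) from the raw identity produced by \eqref{gleichung_fuer_c}; the difference of the two inner sums $\sum_{j=k+1}^{2n+1}\binom{2n+1}{j}-\sum_{j=k+1}^{2n}\binom{2n}{j}$ simplifies to $\sum_{j=k}^{2n}\binom{2n}{j}$-type expressions by Pascal's rule, and matching the $2^{2n}$ versus $2^{2n+1}$ normalizations then lands exactly on the claimed form. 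Everything else is a direct substitution of the results of Proposition~\ref{prop-preimages}.
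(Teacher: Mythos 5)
Your proposal is correct and follows essentially the same route as the paper: part~(a) is exactly the leading-coefficient identity (\ref{formula_highest_coeff}) applied to $f(x,\nu,n)$ together with $2C(n,0)=\binom{2n}{n}$, and part~(b) comes from the constant-coefficient formula for $c(\nu,n)$ combined with the explicit values (\ref{c_nu_explicitly}). The only cosmetic difference is that you start from (\ref{gleichung_fuer_c}) and convert the $\binom{2n+1}{j}$-sums via Pascal's rule plus (a), whereas the paper invokes (\ref{2te_gleichung_fuer_c}) (equivalently (\ref{equation_usful-for-cor})), which already carries the $\sum_{j=k+1}^{2n}\binom{2n}{j}$ factors and the $C(n,0)$ term — the paper itself notes that the two formulas for $c$ differ precisely by (\ref{formula_highest_coeff}), so your conversion is that same observation run in reverse.
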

\begin{proof}
 Part (a) is given by identity (\ref{formula_highest_coeff}) for the leading coefficients of the polynomials $f(x,\nu,n)$.
 Part (b) is given by formula (\ref{2te_gleichung_fuer_c}) for the constants $c(\nu,n)$ of the polynomials $f(x,\nu,n)$ which are also determined by Proposition~\ref{prop-preimages}.
 Part (b) follows in particular from equation (\ref{equation_usful-for-cor}) below by inserting the special values of $c(\nu-2,n)$ given in Proposition~\ref{prop-preimages}.
\end{proof}

\begin{proof}[Proof of Proposition~\ref{prop-preimages}]
Part (a): Notice that
\begin{equation*}
 f(x,\nu,n)\:=\:\frac{1}{n!^2}\prod_{j=0}^{\nu-1}(x-j)(x-n-j)\cdot\prod_{k=1}^{n-\nu}(x-(\nu-1)-k)^2\:.
\end{equation*}
Hence, in Proposition~\ref{urbild-polynom}, the  series $F_j=F_j(\nu,n)$ is given by 
\begin{equation*}
 F_j(\nu,n)\:=\:(-1)^{j-1}\sum_{k=0}^{j-1}(-1)^{k}\binom{k}{n}\binom{k-\nu}{n}\:,
\end{equation*}
where the summands vanish for $k<n+\nu$. In particular, $F_j(\nu,n)=0$ for $0\leq j\leq n+\nu$.
Let
\begin{equation*}
 G(x,\nu,n)\:=\:\sum_{j=0}^{2n}F_j(\nu,n)\prod_{k\not= j}\frac{x-k}{j-k}\:=\: \sum_{j=n+1+\nu}^{2n}F_j(\nu,n)\prod_{k\not= j}\frac{x-k}{j-k}\\
\end{equation*}
be the associated interpolation polynomial of degree $2n$.
We obtain the polynomial solution 
\begin{equation*}
 F(x,\nu,n)\:=\:\sum_{j=0}^{2n}\bigl(F_j(\nu,n)+(-1)^jc(\nu,n)\bigr)\prod_{k\not= j}\frac{x-k}{j-k}\:,
\end{equation*}
where the
constant $c(\nu,n)$ is determined 
by (\ref{2te_gleichung_fuer_c})
 \begin{equation}\label{rattenschwanz}
  \frac{(2n)!}{2^{2n+1}n!^2}+\frac{1}{2^{2n}}\sum_{k=n+\nu}^{2n}(-1)^k\binom{k}{n}\binom{k-\nu}{n}\sum_{j=k+1}^{2n}\binom{2n}{j}\:.
 \end{equation}
Evaluating this formula for $\nu=n$ we obtain
\begin{equation*}
 c(n,n)\:=\:\frac{(2n)!}{2^{2n+1}n!^2}\:=\:\frac{1}{2^{2n}}\cdot C(n,0)\:.
\end{equation*}
Evaluation at $\nu=n-1$ yields
\begin{equation*}
 c(n-1,n)\:=\:\frac{(2n)!}{2^{2n+1}n!^2}-\frac{1}{2^{2n}}\binom{2n-1}{n}\:=\:0\:.
\end{equation*}
We  use these special values  to prove (\ref{c_nu_explicitly}) by increasing induction on $n$ and decreasing induction on $\nu$ using recursion formula (\ref{recursion_for_c_nu}).
But first observe that recursion formula (\ref{recursion_for_c_nu_one}) follows from  recursion formula  (\ref{super_catalan}) for super Catalan numbers once the explicit 
values (\ref{c_nu_explicitly}) hold true.
For the above induction  we have to prove (\ref{recursion_for_c_nu}).
Observe that
 \begin{equation*}
  \binom{k+1}{n}\binom{k+1-\nu}{n-1}-\binom{k}{n-1}\binom{k+1-\nu}{n-1}\:=\: \frac{n+\nu}{n} \binom{k}{n-1}\binom{k+1-\nu}{n-1}\:.
 \end{equation*}
Accordingly,
\begin{align}
 \binom{k}{n}\binom{k-\nu+2}{n}&=\left[\binom{k+1}{n}-\binom{k}{n-1}\right]\cdot\left[\binom{k+1-\nu}{n}+\binom{k-(\nu-1)}{n-1}\right]\nonumber\\
 &=\binom{k+1}{n}\binom{k+1-\nu}{n}+\frac{\nu}{n}\binom{k}{n-1}\binom{k-(\nu-1)}{n-1}\:.\label{binom_coeff_identity}
\end{align}
Using the definition of the super Catalan number, by (\ref{rattenschwanz})  we know
\begin{equation}\label{equation_usful-for-cor}
 2^{2n}c(\nu-2,n)\:=\: C(n,0)+\!\!\!\!\!\sum_{k=n+\nu-2}^{2n}(-1)^k\binom{k}{n}\binom{k-\nu+2}{n}\sum_{j=k+1}^{2n}\binom{2n}{j}\:.
\end{equation}
We split this expression according to the identity for binomial coefficients (\ref{binom_coeff_identity}).
For the first part we obtain
\begin{align}
 &\quad  C(n,0)+\sum_{k=n+\nu-2}^{2n}(-1)^k\binom{k+1}{n}\binom{k+1-\nu}{n}\sum_{j=k+1}^{2n}\binom{2n}{j}\nonumber\\
 =&\quad C(n,0)-\sum_{k=n+\nu}^{2n+1}(-1)^k\binom{k}{n}\binom{k-\nu}{n}\sum_{j=k}^{2n}\binom{2n}{j}\nonumber\\
 =&\quad -C(n,0)-\sum_{k=n+\nu}^{2n}(-1)^k\binom{k}{n}\binom{k-\nu}{n}\sum_{j=k+1}^{2n}\binom{2n}{j}\nonumber\\
 =&\quad(-1)\cdot 2^{2n}\cdot c(\nu,n)\:,\nonumber
\end{align}
where we used  formula (\ref{formula_highest_coeff}) for the highest coefficient $(n!)^{-2}$ of the polynomial $f(x,\nu,n)$.
For the second part we obtain 
\begin{align}
 &\quad \sum_{k=n+\nu-2}^{2n}(-1)^k\binom{k}{n-1}\binom{k-(\nu-1)}{n-1}\sum_{j=k+1}^{2n}\binom{2n}{j}\nonumber\\
 =&\quad 2\cdot\sum_{k=n-1+(\nu-1)}^{2(n-1)}(-1)^k\binom{k}{n-1}\binom{k-(\nu-1)}{n-1}\sum_{j=k+1}^{2n-1}\binom{2n-1}{j}\nonumber\\
 &\quad +\sum_{k=n-1+(\nu-1)}^{2n}\binom{2n-1}{k}(-1)^k\binom{k}{n-1}\binom{k-(\nu-1)}{n-1}\nonumber\\
 =&\quad 4\cdot 2^{2(n-1)}\cdot c(\nu-1,n-1)+2C(n-1,0)\nonumber\\
 &\quad +\sum_{k=n-1+(\nu-1)}^{2n-1}\binom{2(n-1)}{k-1}(-1)^k\binom{k}{n-1}\binom{k-(\nu-1)}{n-1}\nonumber\\
 =&\quad  4\cdot 2^{2(n-1)}\cdot c(\nu-1,n-1)+2\cdot C(n-1,0)-2\cdot C(n-1,0)\nonumber\\
 =&\quad  2^{2n}\cdot c(\nu-1,n-1)\:,\nonumber
\end{align}
where we  used that the sum
\begin{align*}
 &\quad \sum_{k=n-1+(\nu-1)}^{2n-1}\binom{2(n-1)}{k}(-1)^k\binom{k}{n-1}\binom{k-(\nu-1)}{n-1}\\
 =&\quad \sum_{k=n-1+(\nu-1)-1}^{2(n-1)}\binom{2(n-1)}{k}(-1)^{k+1}\binom{k+1}{n-1}\binom{k+1-(\nu-1)}{n-1}\\
 =&\quad -\sum_{k=0}^{2(n-1)}\binom{2(n-1)}{k}(-1)^{k}g(k)\\
 =&\quad -2\cdot C(n-1,0)
\end{align*}
equals up to the constant $(2(n-1))!$ the leading coefficient of the polynomial $g(x)=f(x+1,\nu-1,n-1)$.
So putting the two parts together  keeping in mind (\ref{binom_coeff_identity}), we obtain
\begin{equation*}
 2^{2n}c(\nu-2,n)\:=\:(-1)\cdot 2^{2n}c(\nu,n)+\frac{ 2^{2n}\cdot\nu}{n}\cdot c(\nu-1,n-1)\:.
\end{equation*}
We have proved recursion formula (\ref{recursion_for_c_nu}).
In order to finish the induction argument, by hypothesis we assume that (\ref{c_nu_explicitly}) holds true for $c(\nu,n)$ as well as for $c(\nu-1,n-1)$.
If $\nu$ is of the form $\nu=n-1-2\mu$ these two constants are zero, so (\ref{recursion_for_c_nu}) implies $c(\nu-2,n)=0$.
If $\nu=n-2\mu$ we obtain for the right hand side of  (\ref{recursion_for_c_nu})
\begin{align*}
&  2^{-2n}(-1)^{\mu+1}\Bigl(C(n-\mu,\mu)-\frac{n-2\mu}{n-1}\cdot4\cdot C(n-1,\mu,\mu)\Bigr)\\
& =\frac{(-1)^{\mu+1}(2(n-1-\mu))!(2\mu)!}{2^{2n}(n-1-\mu)!\mu!n!}\bigl(2(2n+1-2\mu)-4(n-2\mu)\bigr)\\
& =\frac{(-1)^{\mu+1}(2(n-1-\mu))!(2\mu)!2(2\mu+1)(\mu+1)}{2^{-2n}(n-1-\mu)!(\mu+1)!n!}\\
& =\frac{(-1)^{\mu+1}}{2^{2n}}C(n-(\mu+1),\mu+1)\:,
\end{align*}
which must equal the left hand side $c(n-2(\mu+1),n)$ of (\ref{recursion_for_c_nu}).

Part (b):
 We proceed again by Proposition~\ref{urbild-polynom} to obtain  the values
 \begin{equation*}
  F_j(n+1,n)\:=\:\sum_{k=0}^{j-1}(-1)^{j-1-k}f(k,n+1,n)\:=\:\left\{\begin{array}{ll}
                                                                    0&\textrm{ if } j=0,\dots,n\\
                                                                    (-1)^{j-1}&\textrm{ if } j=n+1,\dots,2n
                                                                   \end{array}\right.,
 \end{equation*}
as well as the constant
\begin{align*}
 c&=2^{-2n}\Bigl(\frac{(2n)!}{2n!^2}+\sum_{k=0}^{2n}(-1)^kf(k,n+1,n)\sum_{j=k+1}^{2n}\binom{2n}{j}\Bigr)\\
 &=2^{-2n}\Bigl(\frac{1}{2}\binom{2n}{n}+\sum_{j=n+1}^{2n}\binom{2n}{j}\Bigr)\:=\: \frac{1}{2}\:.
\end{align*}
This leads to the solution polynomial
\begin{align*}
 F(x,n+1,n)&=\sum_{j=0}^{2n}\bigl(F_j+(-1)^jc\bigr)\cdot\prod_{k=0,k\not=j}^{2n}\frac{x-k}{j-k}\\
 &=\frac{1}{2}\sum_{j=0}^{n}(-1)^j\prod_{k=0,k\not= j}^{2n}\frac{x-k}{j-k}+\frac{1}{2}\sum_{j=n+1}^{2n}(-1)^{j+1}\prod_{k=0,k\not= j}^{2n}\frac{x-k}{j-k}\:.
\end{align*}
\end{proof}

\section{Mixed summation operator}\label{sec_mixed}
We study the action of the mixed summation operator $\widetilde S_1\widetilde S_2$ in two variables on the functions $P(n,m)$.
\begin{prop}\label{definierende_eigenschaften_mixed}
For $m=1,2,3,\dots$ there is a unique family of poly\-no\-mials~$Q(m,x)$  in $\QQ[x]$ satisfying the following properties.
 \begin{itemize}
  \item [(i)] For all $m$ the degree of the polynomial is $\deg_x Q(m,x)= 2(m-1)$.
  \item [(ii)]
  For all $m$ the leading coefficient of $Q(m,x)$ is $\frac{2^{2m-1}}{(m-1)!^2}$.
  \item [(iii)]
   $Q(m,-x)=Q(m,x+1)$ holds for all $m\in\NN$.
  \item[(iv)] The function  $Q(m,n)$ is  symmetric  on $\NN\times\NN$, i.e. $Q(m,n)=Q(n,m)$.
 \end{itemize}
\end{prop}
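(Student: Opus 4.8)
The plan is to mirror the proof of Proposition~\ref{definierende_eigenschaften}: show that the four properties determine the family $Q(m,x)$ by a recursion on $m$, the recursion simultaneously giving existence and uniqueness. The key structural observation is that property (iii), $Q(m,-x)=Q(m,x+1)$, says precisely that $Q(m,x)$ is invariant under the reflection $x\mapsto 1-x$ (substitute $y=x+1$ to get $Q(m,1-y)=Q(m,y)$); hence $Q(m,x)$ lies in $\QQ[x(x-1)]$, and a polynomial of degree $\leq 2(m-1)$ with this symmetry is a $\QQ$-linear combination of $1,\ x(x-1),\ (x(x-1))^2,\dots,(x(x-1))^{m-1}$, i.e.\ is described by exactly $m$ scalar parameters. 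So after imposing (iii) we work in this $m$-dimensional space.

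For the base case $m=1$, property (i) forces $Q(1,x)$ to be constant, and (ii) forces the constant to be $\tfrac{2^{1}}{0!^2}=2$, so $Q(1,x)=2$; (iii) is clear and (iv) (i.e.\ $Q(1,n)=Q(n,1)$) will be arranged when $Q(n,x)$ is built. Assume $Q(1,x),\dots,Q(m-1,x)$ have been constructed. Inside the $m$-dimensional space of reflection-symmetric polynomials of degree $\leq 2(m-1)$, property (ii) fixes the coefficient of $(x(x-1))^{m-1}$ to equal $\tfrac{2^{2m-1}}{(m-1)!^2}$ (one linear condition), and property (iv) demands $Q(m,k)=Q(k,m)$ for $k=1,\dots,m-1$, the reflected values $Q(m,1-k)$ being then automatic by (iii) ($m-1$ further linear conditions). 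This is a square linear system in the $m$ parameters, and it is nonsingular: a reflection-symmetric polynomial of degree $\leq 2(m-1)$ whose top coefficient vanishes has degree $\leq 2(m-2)$, and if it additionally vanishes at $x=1,\dots,m-1$ then by the reflection symmetry it vanishes at $x=0,-1,\dots,2-m$ as well, giving $2(m-1)$ distinct roots of a polynomial of degree $<2(m-1)$ (for $m\geq 2$), hence the zero polynomial. Thus the system has a unique solution $Q(m,x)$; its top coefficient is $\tfrac{2^{2m-1}}{(m-1)!^2}\neq 0$, so $\deg_x Q(m,x)=2(m-1)$, giving (i) and (ii), while (iii) holds because we searched within the symmetric subspace, and (iv) holds for all pairs $(m,k)$ with $k<m$.

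Finally one notes that the full symmetry (iv) holds for every $m,n\in\NN$: for $n<m$ it is imposed at stage $m$, for $n>m$ it is imposed at stage $n$ (the pair $(m,n)$ with $m<n$ is used there), and for $n=m$ it is trivial; this simultaneous bootstrap is exactly the mechanism already used for Proposition~\ref{definierende_eigenschaften}. I do not expect a real obstacle here: the only point needing a little care is the parameter count, namely checking that (ii) together with the $m-1$ instances of (iv) is exactly the number of conditions needed to single out one symmetric polynomial, together with the nonsingularity of that system — both handled by the reflection-forces-extra-roots argument above. As an independent check on existence (and to obtain an explicit description), one can instead define $Q(m,x)$ by the right-hand side of (\ref{third_summation_equation}) and verify (i)--(iv) using the near-symmetry $P(m,n)=P(n,m)+(-1)^{m+n}(m-n)$ of Proposition~\ref{the_polynomials} and the degree and leading-term data of Section~\ref{sec_summation_op}; this is presumably carried out later in the section.
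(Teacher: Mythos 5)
Your proof is correct and follows essentially the same route as the paper: induct on $m$, use (iii) to reduce to an $m$-dimensional space of reflection-symmetric polynomials, let (ii) fix the top coefficient and (iv) supply the remaining $m-1$ conditions from the previously constructed $Q(k,x)$. The only cosmetic difference is that the paper writes the symmetric variable as $y=(x-\tfrac12)^2$ and concludes by Lagrange interpolation at the $m-1$ distinct points $(k-\tfrac12)^2$, whereas you phrase the same unique solvability as nonsingularity of a linear system via a root-counting argument.
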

\begin{proof}
Similarly to Proposition~\ref{definierende_eigenschaften} properties (i)--(iv) uniquely determine the family of polynomials by recursion. 
By properties (i) and (ii), $Q(1,x)=2$ is constant.
By property (iii), $Q(m,x)$ is an even polynomial in $x-\frac{1}{2}$, so by (ii)
\begin{equation*}
 Q(m,x)\:=\: \frac{2^{2m-1}}{(m-1)!^2}(x-\frac{1}{2})^{2(m-1)}+\sum_{j=0}^{m-2}a_{j,m}(x-\frac{1}{2})^{2j}\:.
\end{equation*}
Assuming  by recursion that the polynomials $Q(k,x)$ are defined for all $k=1,\dots,m-1$, the values $Q(m,k)$ are determined by (iv) for $k=1,\dots,m-1$.
This fixes $m-1$ values of the polynomial $\sum_{j=0}^{m-2}a_{j,m}y^j$ in $y=(x-\frac{1}{2})^{2}$ of degree at most $m-2$. Hence the coefficients $a_{j,m}$, $j=0,\dots,m-2$ are uniquely determined, 
and so is $Q(m,x)$. For example, $Q(2,x)=8(x-\frac{1}{2})^{2}$.
\end{proof}
In particular we obtain
\begin{small}
 \begin{align*}
  Q(1,x)&=2\:,\\
  Q(2,x)&=8(x-\frac{1}{2})^2\:,\\
  Q(3,x)&=8\bigl((x-\frac{1}{2})^2 + \frac{1}{4}\bigr)^2\:,\\
  Q(4,x)&=\frac{32}{9}(x-\frac{1}{2})^2\bigl((x-\frac{1}{2})^2+\frac{5}{4}\bigr)^2\:,\\
  Q(5,x)&=\frac{8}{9}\bigl((x-\frac{1}{2})^4 + \frac{7}{2}(x-\frac{1}{2})^2 + \frac{9}{16}\bigr)^2\:,\\
  Q(6,x)&=\frac{32}{225}(x-\frac{1}{2})^2\bigl((x-\frac{1}{2})^4 + \frac{15}{2}(x-\frac{1}{2})^2 + \frac{89}{16}\bigr)^2\:,\\
  Q(7,x)&=\frac{32}{2025}\bigl((x-\frac{1}{2})^2 + \frac{9}{4}\bigr)^2\bigl((x-\frac{1}{2})^4 + \frac{23}{2}(x-\frac{1}{2})^2 + \frac{25}{16}\bigr)^2\:,\\
  Q(8,x)&=\frac{128}{99225}(x-\frac{1}{2})^2\bigl((x-\frac{1}{2})^6 + \frac{91}{4}(x-\frac{1}{2})^4 + \frac{1519}{16}(x-\frac{1}{2})^2 + \frac{3429}{64}\bigr)^2\:.
 \end{align*}
\end{small}

\begin{prop}\label{properties_of_Q}
 The family of polynomials $Q(m,x)$ of Proposition~\ref{definierende_eigenschaften_mixed} is given by
 \begin{equation*}
   Q(m,x)\:=\:P(m,x)+P(m,x-1)+P(m-1,x)+P(m-1,x-1)\:,
 \end{equation*}
where $P(m,x)$ are the polynomials defined in Proposition~\ref{the_polynomials}. 
The polynomials have the following properties.
\begin{itemize}
 \item [(a)]
  The constant coefficient $Q(m,0)=2$ equals two for all $m\geq 1$.
  \item [(b)] The values $Q(m,n)$ define a symmetric function $Q:\NN\times\NN\to\ZZ$.
  \item [(c)]  For all $m\geq 1$ the polynomial $Q(m,x)$ is even in $x-\frac{1}{2}$
  \begin{equation*}
   Q(m,x)\:=\: \frac{2^{2m-1}}{(m-1)!^2}(x-\frac{1}{2})^{2(m-1)}+\dots+a_{1,m}(x-\frac{1}{2})^{2}+a_{0,m}\:.
  \end{equation*}
\item [(d)] For the Euler operator $\widetilde E$ and the polynomials $A_2(x,m)$ defined in Theorem~\ref{thm_summation_operator} we obtain
\begin{equation*}
 Q(m,x)\:=\: \widetilde E\Bigl(A_2(x,m)+A_2(x,m-1)\Bigr)\:.
\end{equation*}
\end{itemize}
\end{prop}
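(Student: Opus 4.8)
The plan is to prove the displayed formula first and then deduce (a)--(d) from it and from the earlier results. Put $\widetilde Q(m,x)=P(m,x)+P(m,x-1)+P(m-1,x)+P(m-1,x-1)$; by the uniqueness statement in Proposition~\ref{definierende_eigenschaften_mixed} it is enough to verify that $\widetilde Q$ has the four properties listed there. Properties (i) and (ii): by Remark~\ref{remarks}(i) each $P(m,x)$ has degree exactly $2(m-1)$, and in the explicit formula of Proposition~\ref{the_polynomials} every summand $t(\nu,\mu,m;x)\,t(\mu^\ast,\nu^\ast,m;x)$ is monic of degree $2(m-1)$, so the leading coefficient of $P(m,x)$ is $\bigl(\tfrac1{(m-1)!}\sum_{\nu=0}^{m-1}\binom{m-1}{\nu}\bigr)^2=\tfrac{2^{2(m-1)}}{(m-1)!^2}$. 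Replacing $x$ by $x-1$ changes neither degree nor leading term, so $P(m,x)+P(m,x-1)$ has degree $2(m-1)$ and leading coefficient $\tfrac{2^{2m-1}}{(m-1)!^2}$, while the two $P(m-1,\cdot)$ terms have strictly smaller degree; this is (i) and (ii). Property (iii) follows from $P(m,-x)=P(m,x)$ (Proposition~\ref{definierende_eigenschaften}(iii)): $\widetilde Q(m,-x)=P(m,x)+P(m,x+1)+P(m-1,x)+P(m-1,x+1)=\widetilde Q(m,x+1)$; equivalently $\widetilde Q(m,x)=\widetilde Q(m,1-x)$, which is exactly part (c) together with the leading coefficient just computed.

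For property (iv) --- which is the symmetry assertion of part (b) --- I would substitute the nearby symmetry $P(a,b)=P(b,a)+(-1)^{a+b}(b-a)$, valid for all integers $a,b\ge 0$ by Proposition~\ref{definierende_eigenschaften}(iv) together with (i), into each of the four terms of $\widetilde Q(m,n)$. The four ``principal'' terms reassemble into $\widetilde Q(n,m)$, and the four correction terms equal $(-1)^{m+n}$ times $(n-m)$, $-(n-1-m)$, $-(n-m+1)$ and $(n-m)$, which sum to $0$; hence $\widetilde Q(m,n)=\widetilde Q(n,m)$. Integrality in (b) is then clear, since $P(m,k)\in\ZZ$ for every integer $k$ (Corollary~\ref{corrolar_1} together with $P(m,-k)=P(m,k)$ and $P(m,0)=(-1)^{m-1}m$). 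For part (a), evaluating the formula at $x=0$ and using $P(m,-1)=P(m,1)$ gives $Q(m,0)=P(m,0)+P(m,1)+P(m-1,0)+P(m-1,1)$; inserting the closed forms $P(m,0)=(-1)^{m-1}m$ and $P(m,1)=1+(-1)^m(m-1)$ recorded after Proposition~\ref{definierende_eigenschaften}, the sign-dependent parts cancel and leave $Q(m,0)=2$.

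Part (d) is the most direct. Applying $\widetilde S$ to the formula and regrouping,
\begin{equation*}
 \widetilde S Q(m,x)=Q(m,x+1)+Q(m,x)=S_2^2P(m,x)+S_2^2P(m-1,x),
\end{equation*}
which by equation~(\ref{summation_equation_2_to_show}) of Theorem~\ref{thm_summation_operator} equals $A_2(x,m)+A_2(x,m-1)$, with the convention $A_2(x,0)=0$ (consistent with $P(0,x)=0$) taking care of $m=1$. Applying the Euler operator $\widetilde E=\widetilde S^{-1}$ gives $Q(m,x)=\widetilde E\bigl(A_2(x,m)+A_2(x,m-1)\bigr)$. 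I do not expect a real obstacle; the only delicate points are the sign bookkeeping in the cancellation for (iv)/(b) and the degenerate case $m=1$, which is in any event immediate from $Q(1,x)=2$.
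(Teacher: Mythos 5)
Your proposal is correct and takes essentially the same route as the paper: verify that the sum $P(m,x)+P(m,x-1)+P(m-1,x)+P(m-1,x-1)$ satisfies the four defining properties of Proposition~\ref{definierende_eigenschaften_mixed} and invoke uniqueness, then obtain (d) by applying $\widetilde S$ and using equation~(\ref{summation_equation_2_to_show}). The only differences are presentational: you make explicit the cancellation of the $(-1)^{m+n}$ correction terms in the symmetry check (which the paper leaves implicit) and you verify $Q(m,0)=2$ by direct evaluation rather than via $Q(m,0)=Q(m,1)=Q(1,m)=2$; both are fine.
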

\begin{table}
\caption{Initial values of the function $Q:\NN\times\NN\to\ZZ$.}
\begin{small}
\begin{tabular}{c|c|c|c|c|c|c|c|c}
 $Q(m,n)$&$1$&$2$&$3$&$4$&$5$&$6$&$7$&$8$\\
 \hline
$1$&$2$&$2$&$2$&$2$&$2$&$2$&$2$&$2$\\
\hline
$2$&$2$&$18$&$50$&$98$&$162$&$242$&$338$&$450$\\
\hline
$3$&$2$&$50$&$338$&$1250$&$3362$&$7442$&$14450$&$25538$\\
\hline
$4$&$2$&$98$&$1250$&$7938$&$33282$&$106722$&$284258$&$661250$\\
\hline
$5$&$2$&$162$&$3362$&$33282$&$206082$&$927522$&$3323042$&$10044162$\\
\hline
$6$&$2$&$242$&$7442$&$106722$&$927522$&$5664978$&$26688818$&$103190978$\\
\hline
$7$&$2$&$338$&$14450$&$284258$&$3323042$&$26688818$&$161604242$&$786061250$\\
\hline
$8$&$2$&$450$&$25538$&$661250$&$10044162$&$103190978$&$786061250$&$4731504642$\\
\end{tabular}
\end{small}
\end{table}
In Proposition~\ref{prop_x=1/2_values_polynomials}~(c) we determine the constant coefficient $a_{0,m}$ from Proposition~\ref{properties_of_Q}~(c).
By numerical evidence for $1\leq m\leq 100$, we suppose that the polynomial $2\cdot Q(m,x)$  is a square for all $m$.
\begin{proof}[Proof of Proposition~\ref{properties_of_Q}]
 The polynomials $P(m,x)$ satisfy the properties of Proposition~\ref{definierende_eigenschaften}, and by Remark~\ref{remarks}(i) it holds $\deg_xP(m,x)=2(m-1)$ for all $m\geq 1$.
Hence  the sums $Q(m,x)=P(m,x)+P(m,x-1)+P(m-1,x)+P(m-1,x-1)$ satisfy properties (i),(iii), and (iv) of Proposition~\ref{definierende_eigenschaften_mixed}.
From the definition of $P(m,x)$ for $m>0$ we determine the leading coefficient of $P(m,x)$
\begin{equation*}
 \sum_{\nu,\mu=0}^{m-1}\frac{1}{\nu!(m-1-\nu)!\mu!(m-1-\mu)!}\:=\:\left(\frac{2^{m-1}}{(m-1)!}\right)^2\:.
\end{equation*}
It follows that the leading coefficient of $Q(m,x)$ is twice this number, hence equals $\frac{2^{2m-1}}{(m-1)!^2}$.
 Property (a) follows from Proposition~\ref{definierende_eigenschaften_mixed}, $Q(m,0)=Q(m,1)=Q(1,m)=2$.
 Property (b) follows because all the values $P(m,n)$ are integers, see Corollary~\ref{corrolar_1}.
 The shape of the polynomials $Q(m,x)$ given in (c) is provided again by  Proposition~\ref{definierende_eigenschaften_mixed}.
 Recall from Theorem~\ref{thm_summation_operator}
\begin{equation*}
 S_2^2 P(n,x)\:=\: P(n,x+1)+2P(n,x)+P(n,x-1)\:=\: A_2(x,n)\:.
\end{equation*}
For the shifted summation operator $\widetilde S P(n,x)=P(n,x+1)+P(n,x)$ we therefore obtain
\begin{equation*}
 \widetilde S\Bigl( P(n,x)+P(n,x-1)\Bigr)\:=\: A_2(x,n)\:.
\end{equation*}
Hence the polynomials $Q(m,x)$ in question are also given by the Euler operator $\widetilde E=\widetilde S^{-1}$
\begin{equation*}
 Q(m,x)\:=\: \widetilde E\Bigl(A_2(x,m)+A_2(x,m-1)\Bigr)\:.\qedhere
\end{equation*}
\end{proof}

Using of the summation operator
$Sf(x)=f(x+\frac{1}{2})+f(x-\frac{1}{2})$, the explicit formula for the polynomials $Q(m,x)$  given in Proposition~\ref{properties_of_Q} can be reformulated
\begin{equation}\label{Q-S-equation}
 Q(m,x+\frac{1}{2})\:=\: S\bigl(P(m,x)+P(m-1,x)\bigr)\:.
\end{equation}
 This suggests that all the families of polynomials we have defined in this paper should have interesting properties at half-integral places.
 We illustrate this by determining their values in $x=\frac{1}{2}$. We need the following lemma.
\begin{lem}\label{lemma_a1(m,1/2)}
For $m=1,2,3,\dots$ define the polynomials
\begin{equation*}
 a_1(m,x)\:=\: \sum_{\nu=0}^m\begin{bmatrix}x-1+\nu\\\nu\end{bmatrix}\begin{bmatrix}x\\m-\nu\end{bmatrix}\:.
\end{equation*}
Then the values in $x=\frac{1}{2}$ are given by 
\begin{equation*}
 a_1(2n,\frac{1}{2})\:=\:a_1(2n+1,\frac{1}{2})\:=\:\begin{bmatrix} n-\frac{1}{2}\\n\end{bmatrix}
\end{equation*}
for all $n\in\NN_0$.
\end{lem}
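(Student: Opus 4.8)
The plan is to pass to generating functions in an auxiliary variable $t$, reading off the coefficients as polynomials in $x$, and only afterwards to specialize $x=\tfrac12$. First I would record the two formal power series identities in $\QQ[x][[t]]$,
\[
 \sum_{\nu\ge 0}\begin{bmatrix}x-1+\nu\\\nu\end{bmatrix}t^\nu\:=\:(1-t)^{-x}\:,\qquad
 \sum_{k\ge 0}\begin{bmatrix}x\\k\end{bmatrix}t^k\:=\:(1+t)^x\:,
\]
where the first follows from the elementary identity $\begin{bmatrix}x-1+\nu\\\nu\end{bmatrix}=(-1)^\nu\begin{bmatrix}-x\\\nu\end{bmatrix}$ together with the binomial series for $(1+(-t))^{-x}$. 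Forming the Cauchy product of these two series, the coefficient of $t^m$ is precisely the sum defining $a_1(m,x)$, so
\[
 \sum_{m\ge 0}a_1(m,x)\,t^m\:=\:\Bigl(\tfrac{1+t}{1-t}\Bigr)^{x}\:.
\]

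Next I would substitute $x=\tfrac12$. Since $a_1(m,\cdot)$ is a genuine polynomial for each fixed $m$, this is legitimate term by term, and using $\sqrt{\tfrac{1+t}{1-t}}=(1+t)\,(1-t^2)^{-1/2}$ together with $(1-t^2)^{-1/2}=\sum_{k\ge 0}\binom{2k}{k}4^{-k}t^{2k}$ one multiplies out to find that the coefficient of $t^{2n}$ and the coefficient of $t^{2n+1}$ in $\sum_m a_1(m,\tfrac12)t^m$ are both equal to $\binom{2n}{n}4^{-n}$. Hence $a_1(2n,\tfrac12)=a_1(2n+1,\tfrac12)=\binom{2n}{n}4^{-n}$ for every $n\in\NN_0$.

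Finally I would close the loop by identifying this value with the stated generalized binomial coefficient by a direct computation:
\[
 \begin{bmatrix}n-\frac{1}{2}\\n\end{bmatrix}\:=\:\frac{1}{n!}\prod_{j=1}^{n}\Bigl(j-\tfrac12\Bigr)\:=\:\frac{1\cdot 3\cdots(2n-1)}{2^n\,n!}\:=\:\frac{(2n)!}{4^n\,(n!)^2}\:=\:\frac{1}{4^n}\binom{2n}{n}\:,
\]
which completes the argument. I do not expect a genuine obstacle; the only point that deserves a line of justification is that the identity $\sum_m a_1(m,x)t^m=\bigl(\tfrac{1+t}{1-t}\bigr)^x$, which a priori lives in $\QQ[x][[t]]$, may be evaluated at $x=\tfrac12$ — and this is immediate since each coefficient $a_1(m,x)$ is a polynomial in $x$. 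A purely elementary alternative would be an induction on $m$ using the Pascal-type recursions for the generalized binomial coefficients, but the generating-function route is shorter and more transparent.
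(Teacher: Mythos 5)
Your proof is correct and follows essentially the same route as the paper: a Cauchy product of binomial series leading to $\sqrt{\tfrac{1+t}{1-t}}=\tfrac{1+t}{\sqrt{1-t^2}}$, followed by coefficient comparison. The only (harmless) difference is that you first establish the cleaner two-variable identity $\sum_m a_1(m,x)t^m=\bigl(\tfrac{1+t}{1-t}\bigr)^x$ and then set $x=\tfrac12$, whereas the paper specializes to $x=\tfrac12$ before forming the product.
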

\begin{proof}[Proof of Lemma~\ref{lemma_a1(m,1/2)}]
We first notice that
 \begin{equation*}
  \begin{bmatrix} n-\frac{1}{2}\\n\end{bmatrix}\:=\: \frac{1}{2^{2n}}\binom{2n}{n}\:=\: (-1)^n\begin{bmatrix} -\frac{1}{2}\\n\end{bmatrix}\:.
 \end{equation*}
So the generating series of $\begin{bmatrix} n-\frac{1}{2}\\n\end{bmatrix}$ is given by the Taylor series
\begin{equation*}
 \sum_{n=0}^\infty x^n\begin{bmatrix} n-\frac{1}{2}\\n\end{bmatrix}\:=\:\sum_{n=0}^\infty (-x)^n\begin{bmatrix} -\frac{1}{2}\\n\end{bmatrix}\:=\:\bigl(1-x\bigr)^{-\frac{1}{2}}\:,
\end{equation*}
while 
\begin{equation*}
 \bigl(1+x\bigr)^{\frac{1}{2}}\:=\:\sum_{n=0}^\infty x^n\begin{bmatrix} \frac{1}{2}\\n\end{bmatrix}\:.
\end{equation*}
By Cauchy product expansion we obtain the generating series for $a_1(m,\frac{1}{2})$
\begin{equation*}
 \sqrt{\frac{1+x}{1-x}}\:=\:\sum_{m=0}^\infty x^m\sum_{\nu=0}^m\begin{bmatrix} \frac{1}{2}\\\nu\end{bmatrix}\begin{bmatrix} m-\nu-\frac{1}{2}\\m-\nu\end{bmatrix}
 \:=\:\sum_{m=0}^\infty x^m a_1(m,\frac{1}{2})\:.
\end{equation*}
On the other hand, by the  Taylor series above
\begin{equation*}
 \frac{1}{\sqrt{1-x^2}}\:=\:\sum_{n=0}^\infty x^{2n}\begin{bmatrix} n-\frac{1}{2}\\n\end{bmatrix}\:
\end{equation*}
we obtain the expansion
\begin{equation*}
 \sqrt{\frac{1+x}{1-x}}\:=\:\frac{1+x}{\sqrt{{1-x^2}}}\:=\:\sum_{n=0}^\infty\bigl( x^{2n}+x^{2n+1}\bigr)\begin{bmatrix} n-\frac{1}{2}\\n\end{bmatrix}\:.
\end{equation*}
Comparing coefficients, the lemma is proved.
\end{proof}
\begin{prop}\label{prop_x=1/2_values_polynomials}
\begin{itemize}
 \item [(a)]
For the polynomials $A_1(m,x)$ and $A_2(x,m)$ of Theorem~\ref{thm_summation_operator} the values in $x=\frac{1}{2}$ are given by
\begin{equation*}\label{A_1_in_1/2}
 A_1(2n,\frac{1}{2})\:=\: A_1(2n+1,\frac{1}{2})\:=\: \begin{bmatrix} n-\frac{1}{2}\\n\end{bmatrix}^2\:,
\end{equation*}
respectively
\begin{equation*}
 A_2(\frac{1}{2},m)\:=\:(2m)^2A_1(m,\frac{1}{2})\:.
\end{equation*}
\item [(b)]
For all $n\in\NN_0$ define the rational number
 \begin{equation*}
  r(n)\:=\:\sum_{k=0}^n\begin{bmatrix}k-\frac{1}{2}\\k\end{bmatrix}^2\:=\:\sum_{k=0}^n\left(\frac{1}{2^{2k}}\binom{2k}{k}\right)^2\:.
 \end{equation*}
Then the values in $x=\frac{1}{2}$ of the  polynomials $P(m,x)$ of Proposition~\ref{definierende_eigenschaften} are given by the recursion formula
\begin{equation}\label{P_in_1/2}
 P(2n+1,\frac{1}{2})\:=\:r(n)\:=\:(-1)\cdot P(2n+2,\frac{1}{2})\:.
\end{equation}
\item [(c)]
The values in $x=\frac{1}{2}$ of the polynomials $Q(m,x)$ are given by $Q(2n,\frac{1}{2})=0$, respectively
\begin{equation*}
 Q(2n+1,\frac{1}{2})\:=\:2\begin{bmatrix} n-\frac{1}{2}\\n\end{bmatrix}^2\:.
\end{equation*}
\end{itemize}
\end{prop}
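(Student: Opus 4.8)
The plan is to treat the three parts in order, reducing each to results already established. \emph{Part (a)} is essentially immediate: by definition $A_1(m,x) = a_1(m,x)^2$ with $a_1$ the polynomial of Lemma~\ref{lemma_a1(m,1/2)}, so squaring the values found there gives $A_1(2n,\frac{1}{2}) = A_1(2n+1,\frac{1}{2}) = \begin{bmatrix} n-\frac{1}{2}\\n\end{bmatrix}^2$, and for $A_2$ one evaluates the polynomial identity $m^2 A_1(m,x) = x^2 A_2(x,m)$ of Proposition~\ref{properties_A1_A2}(a) at $x = \frac{1}{2}$ to obtain $A_2(\frac{1}{2},m) = 4m^2 A_1(m,\frac{1}{2}) = (2m)^2 A_1(m,\frac{1}{2})$.

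For \emph{part (b)}, write $p_m := P(m,\frac{1}{2})$ and $d_k := \begin{bmatrix} k-\frac{1}{2}\\k\end{bmatrix}^2$, so that $r(n) = \sum_{k=0}^n d_k$ satisfies $r(n) = r(n-1) + d_n$. Since equation (\ref{summation_equation_to_show}) is an identity of polynomials it may be evaluated at $x = \frac{1}{2}$, and together with part (a) it gives the scalar recursion $p_{m+1} + 2p_m + p_{m-1} = d_{\lfloor m/2 \rfloor}$ for all $m \geq 1$, with $p_0 = 0$ (from $P(0,x) = 0$) and $p_1 = 1$ (from $P(1,x) = 1$). I would then prove by induction on $n$ that $p_{2n+1} = r(n)$ and $p_{2n+2} = -r(n)$: the base case $n = 0$ reads $p_1 = 1 = r(0)$ and $p_2 = d_0 - 2p_1 - p_0 = -1 = -r(0)$, while for the step the recursion at $m = 2n+2$ gives $p_{2n+3} = d_{n+1} + 2r(n) - r(n) = r(n) + d_{n+1} = r(n+1)$, and the recursion at $m = 2n+3$ then gives $p_{2n+4} = d_{n+1} - 2r(n+1) + r(n) = -r(n+1)$, using $r(n) = r(n+1) - d_{n+1}$. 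This is exactly the asserted formula.

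For \emph{part (c)}, since every $P(k,x)$ is even (Proposition~\ref{definierende_eigenschaften}(iii)), evaluating the explicit formula for $Q$ in Proposition~\ref{properties_of_Q} at $x = \frac{1}{2}$ pairs up the four summands: $Q(m,\frac{1}{2}) = 2\bigl(P(m,\frac{1}{2}) + P(m-1,\frac{1}{2})\bigr) = 2(p_m + p_{m-1})$. Feeding in part (b), for $m = 2n$ with $n \geq 1$ one gets $p_{2n} + p_{2n-1} = -r(n-1) + r(n-1) = 0$, hence $Q(2n,\frac{1}{2}) = 0$; for $m = 2n+1$ one gets $p_{2n+1} + p_{2n} = r(n) - r(n-1) = d_n$ (with the convention $r(-1) = 0$, which also handles $n = 0$ where $p_0 = 0$), hence $Q(2n+1,\frac{1}{2}) = 2d_n = 2\begin{bmatrix} n-\frac{1}{2}\\n\end{bmatrix}^2$.

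I do not anticipate a real obstacle, since the one genuinely combinatorial computation — the closed form for $a_1(m,\frac{1}{2})$ — has already been isolated in Lemma~\ref{lemma_a1(m,1/2)}. The points that need mild care are that the summation identities of Theorem~\ref{thm_summation_operator} are polynomial identities and hence may be evaluated at the half-integer $x = \frac{1}{2}$; the parity bookkeeping $A_1(2n,\frac{1}{2}) = A_1(2n+1,\frac{1}{2}) = d_n$ that drives the recursion in (b); and the boundary case $n = 0$ in (c), handled by the convention $r(-1) = 0$ together with the fact that $Q(m,x)$ is defined only for $m \geq 1$.
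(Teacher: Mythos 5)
Your proposal is correct and follows essentially the same route as the paper: part (a) by squaring Lemma~\ref{lemma_a1(m,1/2)} and evaluating the identity $m^2A_1(m,x)=x^2A_2(x,m)$ at $x=\tfrac12$, part (b) by evaluating the polynomial identity (\ref{summation_equation_to_show}) at $x=\tfrac12$ and inducting with $A_1(2n,\tfrac12)=A_1(2n+1,\tfrac12)$ as the driver, and part (c) from $Q(m,\tfrac12)=2\bigl(P(m,\tfrac12)+P(m-1,\tfrac12)\bigr)$ via the evenness of $P(m,x)$. The only cosmetic difference is your slightly leaner base case ($p_0,p_1,p_2$ rather than the paper's explicit check of $P(3,\tfrac12)$ and $P(4,\tfrac12)$), which changes nothing of substance.
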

\begin{proof}[Proof of Proposition~\ref{prop_x=1/2_values_polynomials}]
 Because $A_1(m,x)=a_1(m,x)^2$ the first identity of part~(a) follows from Lemma~\ref{lemma_a1(m,1/2)}.
 For the second identity recall from Proposition~\ref{properties_A1_A2}~(a) that $m^2A_1(m,x)=x^2A_2(x,m)$.
 For part~(b) we proceed by induction. By the list of $P(m,x)$ following Proposition~\ref{definierende_eigenschaften} it holds 
 $P(1,\frac{1}{2})=1=r(0)=-P(2,\frac{1}{2})$ and $P(3,\frac{1}{2})=\frac{5}{4}=r(1)=P(4,\frac{1}{2})$.
 Assume formula (\ref{P_in_1/2}) holds true for all $n<N$. 
 Then by  identity (\ref{summation_equation_to_show}) of Theorem~\ref{thm_summation_operator}
 \begin{equation*}
  P(2N+1,\frac{1}{2})+2P(2N,\frac{1}{2})+P(2N-1,\frac{1}{2})\:=\: A_1(2N,\frac{1}{2})\:,
 \end{equation*}
it follows
\begin{equation*}
 P(2N+1,\frac{1}{2})\:=\:\begin{bmatrix} N-\frac{1}{2}\\N\end{bmatrix}+2r(N-1)-r(N-1)\:=\:r(N)\:.
 \end{equation*}
Similarly, from
\begin{equation*}
  P(2N+2,\frac{1}{2})+2P(2N+1,\frac{1}{2})+P(2N,\frac{1}{2})\:=\: A_1(2N+1,\frac{1}{2})
 \end{equation*}
 we deduce
 \begin{equation*}
  P(2N+2,\frac{1}{2})\:=\:\begin{bmatrix} N-\frac{1}{2}\\N\end{bmatrix}-2r(N)+r(N-1)\:=\:-r(N)\:.
 \end{equation*}
Concerning part~(c), by (\ref{Q-S-equation}) it holds
\begin{equation*}
 Q(m,\frac{1}{2})\:=\:P(m,\frac{1}{2})+P(m,-\frac{1}{2})+P(m-1,\frac{1}{2})+P(m-1,-\frac{1}{2})\:.
\end{equation*}
Recalling the polynomials $P(m,x)$ are even functions, we obtain
\begin{equation*}
 Q(m,\frac{1}{2})\:=\:2\bigl( P(m,\frac{1}{2})+P(m-1,\frac{1}{2})\bigr)\:.
\end{equation*}
By part (b), this is zero in case $m=2n$ is even, whereas in case $m=2n+1$ we obtain $Q(2n+1,\frac{1}{2})=2(r(n)-r(n-1))=2\begin{bmatrix} n-\frac{1}{2}\\n\end{bmatrix}^2$.
\end{proof}
By the set of initial values $P(m,\frac{1}{2})=P(m,-\frac{1}{2})$, $A_1(m,\frac{1}{2})$, and $A_2(\frac{1}{2},m)$ for all $m\in\NN$, we obtain a recursion for the values 
$P(m,\frac{2k+1}{2})$, $A_1(m,\frac{2k+1}{2})$, and $A_2(\frac{2k+1}{2},m)$ for all $k\in\NN$ as follows.
Identity (\ref{summation_equation_2_to_show}) implies for all $m\in\NN$
\begin{equation*}
 P(m,\frac{2k+1}{2})\:= A_2(\frac{2k-1}{2},m)-2P(m,\frac{2k-1}{2})-P(m,\frac{2k-3}{2})\:.
\end{equation*}
Inserting this into identity (\ref{summation_equation_to_show}) yields for all $m\in\NN$
\begin{equation*}
 A_1(m,\frac{2k+1}{2})\:=\:P(m+1,\frac{2k+1}{2})+2P(m,\frac{2k+1}{2})+P(m-1,\frac{2k+1}{2})\:.
\end{equation*}
Then by Proposition~\ref{properties_A1_A2}~(a)
\begin{equation*}
 A_2(\frac{2k+1}{2},m)\:=\:\bigl(\frac{2m}{2k+1}\bigr)^2A_1(m,\frac{2k+1}{2})\:,
\end{equation*}
which closes the recursion cycle. By (\ref{Q-S-equation}), we obtain the values $Q(m,\frac{2k+1}{2})$ as well.
\bigskip

We remark another polynomial identity which arises from the above considerations for the polynomials $Q(m,x)$.
We rearrange
\begin{align*}
 A_2(x,m)=&\left(\sum_{\mu=0}^m\begin{bmatrix}
                                x+m-1-\mu\\m-1
                               \end{bmatrix}\binom{m}{\mu}\right)^2\\
&=\sum_{\mu=0}^m\binom{m}{\mu}^2f(x+m-1-\mu,0,m-1)\\
                               & + 2\cdot\sum_{0\leq\nu<\mu\leq m}\binom{n}{\nu}\binom{m}{\mu}f(x+m-1-\nu,\mu-\nu,m-1)\:.
\end{align*}
Here the polynomials $f(x,\nu,m-1)$ were defined in Proposition~\ref{prop-preimages} where we determined their preimages under $\widetilde S$. Hence we may read off the following proposition.
\begin{prop}\label{prop_new_polynomial_id}
 For the polynomials $A_2(x,m)$ defined in Theorem~\ref{thm_summation_operator} it holds true
 \begin{align*}
  \widetilde E\bigl(A_2(x,m)\bigr)=&
  \sum_{\mu=0}^{m}\binom{m}{\mu}^2F(x+m-1-\mu,0,m-1)\\
  &+ 2\cdot\sum_{0\leq\nu<\mu\leq m}\binom{m}{\nu}\binom{m}{\mu}F(x+m-1-\nu,\mu-\nu,m-1)\:,
 \end{align*}
 where the polynomials $F(x,\nu,m-1)$ were defined in Proposition~\ref{prop-preimages}.
\end{prop}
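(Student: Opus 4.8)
The plan is to apply the Euler operator $\widetilde E$ termwise to the decomposition of $A_2(x,m)$ displayed immediately before the statement, using that $\widetilde E$ is $\QQ$-linear on $\QQ[x]$, that it commutes with integer translations of the argument, and that by Proposition~\ref{prop-preimages} the polynomial preimage of $f(x,\nu,m-1)$ under $\widetilde S$ is precisely $F(x,\nu,m-1)$.

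First I would recall why that decomposition holds. Expanding the square gives
\begin{equation*}
 A_2(x,m)=\sum_{\nu,\mu=0}^{m}\binom{m}{\nu}\binom{m}{\mu}\begin{bmatrix}x+m-1-\nu\\m-1\end{bmatrix}\begin{bmatrix}x+m-1-\mu\\m-1\end{bmatrix},
\end{equation*}
and for $\nu\le\mu$, putting $y=x+m-1-\nu$, the $(\nu,\mu)$ summand equals $\binom{m}{\nu}\binom{m}{\mu}\begin{bmatrix}y\\m-1\end{bmatrix}\begin{bmatrix}y-(\mu-\nu)\\m-1\end{bmatrix}=\binom{m}{\nu}\binom{m}{\mu}f\bigl(x+m-1-\nu,\mu-\nu,m-1\bigr)$, while the symmetric summand with $\mu<\nu$ produces the same quantity with $\nu$ and $\mu$ interchanged. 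Separating the diagonal $\nu=\mu$ and pairing off the remaining terms reproduces exactly the two sums in the text preceding the proposition.

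The substantive step is then applying $\widetilde E$. Because $\widetilde S$ intertwines every integer translation $T_a\colon g(x)\mapsto g(x+a)$ — one checks directly that $\widetilde S(T_a g)(x)=g(x+a+1)+g(x+a)=T_a(\widetilde S g)(x)$ — its inverse $\widetilde E$ on $\QQ[x]$ also commutes with $T_a$, so that $\widetilde E\bigl(f(x+m-1-\nu,\mu-\nu,m-1)\bigr)=F(x+m-1-\nu,\mu-\nu,m-1)$. Here one should check that the index range is covered: the difference $\mu-\nu$ equals $0$ on the diagonal and runs through $1,\dots,m$ off it, and Proposition~\ref{prop-preimages}(a) supplies $F(\cdot,\nu,m-1)$ for $0\le\nu\le m-1$ while part~(b) supplies the boundary case $\nu=m$, so every term has a known preimage. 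Invoking the linearity of $\widetilde E$ then yields the asserted formula.

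I do not expect a genuine obstacle; the argument is essentially bookkeeping. The only points requiring a moment's care are the translation-equivariance of $\widetilde E$, which is what lets one move argument-shifts inside and outside of $\widetilde E$, and the verification that the polynomials $F(\cdot,\nu,m-1)$ that actually occur range over exactly the indices for which Proposition~\ref{prop-preimages} furnishes an explicit preimage.
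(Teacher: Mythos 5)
Your proposal is correct and follows essentially the same route as the paper: expand the square defining $A_2(x,m)$, pair the off-diagonal terms, and apply $\widetilde E$ termwise using the preimages $F(\cdot,\nu,m-1)$ from Proposition~\ref{prop-preimages}. The paper simply displays this rearrangement and says the result can be "read off"; your additional checks — the translation-equivariance of $\widetilde E$ and the verification that the shifts $\mu-\nu$ run exactly over the indices $0,\dots,m-1$ covered by part~(a) plus the boundary case $m$ covered by part~(b) — are exactly the bookkeeping the paper leaves implicit.
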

\begin{cor}
 There is a non-trivial polynomial identity given by $P(m,x)+P(m,x-1)=\widetilde E\bigl(A_2(x,m)\bigr)$,
 \begin{eqnarray*}
  &&\sum_{\nu,\mu=0}^{m-1}\frac{t(\nu,\mu,m;x)t(\mu^\ast,\nu^\ast,m;x)+t(\nu,\mu,m;x-1)t(\mu^\ast,\nu^\ast,m;x-1)}{\nu!\nu^\ast!\mu!\mu^\ast!}\\
  &&\quad =\quad \sum_{\mu=0}^{m}\binom{m}{\mu}^2F(x+m-1-\mu,0,m-1)\\
  &&\quad\quad\quad+ 2\cdot\sum_{0\leq\nu<\mu\leq m}\binom{m}{\nu}\binom{m}{\mu}F(x+m-1-\nu,\mu-\nu,m-1)\:.
 \end{eqnarray*}
\end{cor}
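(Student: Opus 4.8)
The plan is to deduce the corollary directly from Theorem~\ref{thm_summation_operator}, the invertibility of $\widetilde S$ on $\QQ[x]$, and Proposition~\ref{prop_new_polynomial_id}; the two explicit sides then come by substituting the closed formulas of Proposition~\ref{the_polynomials} and Proposition~\ref{prop-preimages}. First I would set
\begin{equation*}
 g_m(x)\:=\:P(m,x)+P(m,x-1),
\end{equation*}
a polynomial in $x$ of degree $2(m-1)$, and compute
\begin{equation*}
 \widetilde S g_m(x)\:=\:g_m(x+1)+g_m(x)\:=\:P(m,x+1)+2P(m,x)+P(m,x-1)\:=\:A_2(x,m),
\end{equation*}
where the last equality is the second variable summation equation (\ref{summation_equation_2_to_show}) of Theorem~\ref{thm_summation_operator}, used as an identity of polynomials (the final line of that theorem's proof shows both sides are polynomials of degree $2(m-1)$ agreeing at all integers, hence everywhere). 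Since $\widetilde S$ is bijective on $\QQ[x]$ with inverse $\widetilde E$ (Section~\ref{sec_Euler_operator}) and $g_m$ is a polynomial, applying $\widetilde E$ yields $P(m,x)+P(m,x-1)=\widetilde E\bigl(A_2(x,m)\bigr)$, which is the asserted identity; this is precisely the computation already carried out in the proof of Proposition~\ref{properties_of_Q}~(d).

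Next I would make both sides explicit. On the left, substitute the formula of Proposition~\ref{the_polynomials} for $P(m,x)$ and for $P(m,x-1)$ and add, producing
\begin{equation*}
 \sum_{\nu,\mu=0}^{m-1}\frac{t(\nu,\mu,m;x)t(\mu^\ast,\nu^\ast,m;x)+t(\nu,\mu,m;x-1)t(\mu^\ast,\nu^\ast,m;x-1)}{\nu!\nu^\ast!\mu!\mu^\ast!}.
\end{equation*}
On the right, invoke Proposition~\ref{prop_new_polynomial_id}, which already rewrites $\widetilde E\bigl(A_2(x,m)\bigr)$ as the stated combination of the preimage polynomials $F(x,\nu,m-1)$ of Proposition~\ref{prop-preimages}, with the shifts $x+m-1-\mu$ and $x+m-1-\nu$ and the binomial weights $\binom{m}{\mu}^2$ and $2\binom{m}{\nu}\binom{m}{\mu}$. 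Equating the two displays gives the polynomial identity in the statement.

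The only point that requires care is that (\ref{summation_equation_2_to_show}) must be invoked as an equality of polynomials in $x$, not merely as an equality of values at integers, so that it is legitimate to apply the polynomial operator $\widetilde E$ to both sides; this is exactly what the last paragraph of the proof of Theorem~\ref{thm_summation_operator} supplies. Everything else is bookkeeping: the left side is by construction a polynomial, a sum over the index range of four products of linear factors; the right side is a polynomial because each $F(x,\nu,m-1)$ is (Proposition~\ref{prop-preimages}); and no limiting or interpolation argument enters. Hence I do not expect a genuine obstacle here — the corollary is the superposition of the preceding results, and its ``non-triviality'' is simply that the two closed forms bear no term-by-term resemblance to one another.
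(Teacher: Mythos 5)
Your proposal is correct and follows essentially the same route the paper intends: the identity $\widetilde S\bigl(P(m,x)+P(m,x-1)\bigr)=A_2(x,m)$ from the proof of Proposition~\ref{properties_of_Q}~(d) (via equation~(\ref{summation_equation_2_to_show}) of Theorem~\ref{thm_summation_operator}, which is indeed stated as a polynomial identity), inverted by $\widetilde E$, then made explicit on the left by Proposition~\ref{the_polynomials} and on the right by Proposition~\ref{prop_new_polynomial_id}. The paper leaves the corollary without a written proof precisely because it is this superposition of the preceding results, which you have reconstructed accurately.
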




\end{document}